\documentclass[final]{amsart}

\usepackage{booktabs}
\usepackage{fullpage}
\usepackage{tristan}

\usepackage{algorithm}
\usepackage{algpseudocode}

\usepackage{pgfplots}
\usepackage{pgfplotstable}
\usepackage{subcaption}
\usepackage{graphicx}
\usepackage{placeins}

\definecolor{softorange}{RGB}{238,145,75}
\definecolor{softorange2}{RGB}{251,188,126}
\definecolor{softblue}{RGB}{126,169,199}
\definecolor{softteal}{RGB}{129,190,178}
\definecolor{softpurple}{RGB}{178,157,204}
\definecolor{softgrey}{RGB}{190,190,190}
\definecolor{darkorange}{RGB}{180,86,36}
\definecolor{darkgrey}{RGB}{70,70,70}

\newcounter{numexample}

\newcommand{\CurrentExample}{Example~\thenumexample}

\newcommand{\ExampleSubsection}[2]{%
  \refstepcounter{numexample}%
  \label{#2-exno}%
  \subsection{\CurrentExample: #1}%
  \label{#2}%
}

\usepackage{tikz}
\usetikzlibrary{arrows.meta,positioning,fit,calc,backgrounds,patterns}
\usepackage[
  giveninits=true,
  eprint=false,
  url=false,
  style=alphabetic,
  maxbibnames=99
]{biblatex}
\AtBeginBibliography{\footnotesize}

\author[1,2]{Tristan Pryer}

\address{$^1$ Institute for Mathematical Innovation\\ University of
  Bath, Bath, UK. $^2$ Department of Mathematical Sciences
  \\ University of Bath, Bath, UK.}

\begin{document}

\title{Batched Wavefront Sweeps for Linear Transport Uncertainty Quantification}

\begin{abstract}
  We develop a graph-compatible batched wavefront formulation for
  upwind discontinuous Galerkin discretisations of linear transport.
  For sweepable discretisations, the cell equations form a block
  lower-triangular system under a topological ordering of the directed
  upwind dependency graph. This causal structure can be shared across
  families of fixed-source problems whose local operators, sources and
  inflow data vary. We exploit this observation by grouping channels
  with a common dependency graph into \emph{graph-compatible sweep
  classes} and carrying samples, right-hand sides, energy groups and
  sign-compatible angular ordinates as tensor dimensions within a
  common wavefront schedule.

  We prove that the resulting batched wavefront algorithm is
  algebraically equivalent to independent classical DG sweeps for
  every channel in a class, while exposing parallelism simultaneously
  across wavefront cells and channel dimensions. We realise the
  formulation as a GPU tensor program based on batched cell-local
  solves and quantify its work, depth and storage requirements,
  including the throughput--memory tradeoff introduced by sample
  microbatching.  We further characterise the discrete
  parameter-to-observable map on fixed face-sign branches and show
  that reverse-mode differentiation through the wavefront algorithm
  reproduces the corresponding discrete adjoint action.

  Numerical experiments verify the expected DG convergence and adjoint
  consistency, demonstrate substantial GPU execution gains from sample
  batching, and illustrate the method in a material-shadowing
  uncertainty-quantification problem and a coupled multigroup
  C5G7/KAIST-inspired power-iteration workload.
\end{abstract}

\keywords{linear transport; discontinuous Galerkin methods; transport
  sweeps; wavefront algorithms; batched linear algebra; uncertainty
  quantification; automatic differentiation}

\maketitle

\section{Introduction}
\label{sec:introduction}

Discrete-ordinates transport calculations repeatedly solve
fixed-source advection--reaction problems. The solves occur over
angular ordinates and energy groups and inside source, power and
multiphysics iterations; further axes arise from multiple right-hand
sides and parameter samples. Classical deterministic methods exploit
the causal structure of the streaming operator through an upwind
transport sweep \cite{LewisMiller1984,AdamsLarsen2002}. Sampling,
stochastic collocation, quasi-Monte Carlo and multilevel Monte Carlo
methods add an ensemble of such calculations
\cite{XiuHesthaven2005,NobileTemponeWebster2008,Owen2013MonteCarlo,
  Giles2008}. Repeated transport solves are therefore a central cost
in neutron-transport uncertainty quantification
\cite{FichtlPrinja2011StochasticCollocation,
  GrahamParkinsonScheichl2018,CoxEtAl2022NTE, PasmannEtAl2022iQMC};
stochastic formulations provide a complementary probabilistic
description of the transport process
\cite{HortonKyprianouVillemonais2020NTEI,
  HarrisHortonKyprianou2020NTEII}.

For an upwind discontinuous Galerkin (DG) discretisation, each cell
depends only on inflow data and on traces already computed in upwind
neighbours. When the directed cell-dependency graph is acyclic, a
topological ordering makes the global matrix block lower triangular and
the transport sweep is block forward substitution. This structure goes
back to the original DG transport schemes
\cite{ReedHill1973,LesaintRaviart1974}; the stability and convergence
properties used below are classical
\cite{JohnsonPitkaranta1986,Cockburn1998IntroDG,CockburnShu2001,
CockburnDongGuzman2008}.

The transport-sweep literature addresses spatial scheduling, dependency
graphs, prioritisation, partitioning and cycle treatment
\cite{KochBakerAlcouffe1992KBA,BakerKoch1998,Pautz2002,
PlimptonEtAl2005,PautzBailey2017,HautEtAl2019,
VermaakEtAl2021}. Results on sweepability and cycle-free meshes
clarify when a pure topological sweep exists
\cite{CamminadyFrank2018Sweepable,AdamsEtAl2019OptimalSweeps,
calloo2026cycle,PeterEtAl2022SweepArepo}. Existing many-core and GPU
implementations also expose parallelism over spatial cells, angular
directions and energy groups
\cite{GongEtAl2011GPUTransport,GongEtAl2012UnstructuredGPU,
DeakinEtAl2018Parallelism,SearlesEtAl2019MiniSweep,
ZhangEtAl2025STRAUM,MorganEtAl2026OneCellGPU}. In particular,
angular and group aggregation within a common transport schedule is
already used in large-scale deterministic sweep algorithms
\cite{AdamsEtAl2019OptimalSweeps,VermaakEtAl2021,
AndrsEtAl2026OpenSn}.

Embedded ensemble propagation provides a complementary mechanism for
advancing parameter samples simultaneously, sharing geometric and
algebraic data while accommodating sample-dependent operators
\cite{PhippsEtAl2017EmbeddedEnsemble, DEliaEtAl2018EnsembleGrouping,
  LiegeoisEtAl2020EnsembleGMRES}. Batched dense and sparse linear
algebra similarly targets collections of independent small systems on
accelerator architectures
\cite{HaidarEtAl2015BatchedGPU,AbdelfattahEtAl2016BatchedGEMM,
  LiegeoisEtAl2023BatchedSparse}. Within deterministic transport
recent work has developed space--angle--energy sweep kernels,
vectorised-ordinate kernels sharing an octant schedule, and
performance-portable GPU sweeps based on batched local linear algebra
\cite{SuauEtAl2024SharedMemory,SuauEtAl2025Vectorized,
  SuauEtAl2025Batched}.

The closest antecedents address complementary subsets of the present
problem. In the embedded-ensemble formulations cited above, parameter
samples are propagated through a shared algebraic layout and the local
operator may vary between samples, but the transport-specific
dependency graph is not used to construct the ensemble partition.
Deterministic aggregated sweeps instead reuse a spatial schedule over
angle and energy-group channels, but stochastic sample batching and
branchwise differentiation are not principal components of those
formulations. The contribution here is a graph-compatible tensor
formulation that combines these axes while permitting the local DG
blocks, source terms and inflow data to vary across every channel.

Specifically, we call fixed-source channels \emph{graph compatible}
when their interior-face upwind signs induce the same directed cell
graph. Any one topological ordering of that graph may then be reused
for every channel in the corresponding \emph{sweep class}. Within a
class, parameter samples, right-hand sides, energy groups and
sign-compatible ordinates are carried in one tensor layout. Coupled
group and angular contributions are incorporated when forming the
fixed source, so the active channels remain independent during each
transport inversion. The algorithms and experiments below use
graph-compatible classes.  Section~\ref{sec:analysis} also introduces
the broader notion of schedule compatibility, in which the union of
the channel dependency graphs is acyclic and therefore admits a common
topological ordering.  The formulation identifies fixed-face-sign
regions on which differentiation of the wavefront program agrees with
the assembled discrete adjoint. Section~\ref{sec:numerics} quantifies
the resulting execution-layout speedup and memory--throughput
tradeoff.

Figure~\ref{fig:intro_tensorised_sweep} illustrates the construction.
A classical sweep stores one DG coefficient vector per cell. In the
tensorised sweep, the spatial wavefront ordering is unchanged, but
each cell state carries leading indices for the channels in its sweep
class. The cell updates on a wavefront are independent over both cells
and batch entries. The graph-based batching principle also extends to
sweepable polygonal and polyhedral meshes, where upwind traces and a
topological ordering are obtained from the corresponding
face-connectivity graph
\cite{calloo2026cycle,calloo2026diffusion,calloo2026transport}. We
concentrate on Cartesian meshes because their regular connectivity
permits upwind traces to be implemented through tensor shifts,
reducing indexing overhead and providing an efficient
accelerator-oriented realisation. Channels with different face-sign
patterns use separate wavefront schedules.

\begin{figure}[h!]
\centering
\resizebox{\textwidth}{!}{
\begin{tikzpicture}[
  x=1cm,
  y=1cm,
  font=\small,
  text=black,
  >={Latex[length=2.5mm,width=1.8mm]},
  arrow/.style={->, thick, draw=black!75},
  karrow/.style={->, very thick, draw=black!60, opacity=0.5},
  panel/.style={
    draw=black!25,
    rounded corners=4pt,
    thick,
    fill=black!2
  },
  cell/.style={
    draw=black!40,
    fill=white,
    minimum width=0.34cm,
    minimum height=0.34cm,
    inner sep=0pt
  },
  activecell/.style={
    draw=red!80!black,
    very thick,
    fill=red!8,
    minimum width=0.34cm,
    minimum height=0.34cm,
    inner sep=0pt
  },
  wavecell/.style={
    draw=black!40,
    fill=blue!25,
    minimum width=0.34cm,
    minimum height=0.34cm,
    inner sep=0pt
  },
  oldwavecell/.style={
    draw=black!35,
    fill=blue!9,
    minimum width=0.34cm,
    minimum height=0.34cm,
    inner sep=0pt
  },
  box/.style={
    draw=black!45,
    rounded corners=2pt,
    thick,
    fill=white,
    align=center,
    inner sep=4pt
  },
  algbox/.style={
    draw=black!55,
    rounded corners=2pt,
    thick,
    fill=yellow!18,
    align=center,
    inner sep=4pt
  },
  tensorface/.style={
    draw=black!45,
    thick
  }
]

\draw[panel] (0.0,0.0) rectangle (5.95,6.15);
\draw[panel] (6.25,0.0) rectangle (12.95,6.15);
\draw[panel] (13.25,0.0) rectangle (20.05,6.15);

\node[anchor=west, font=\bfseries] at (0.25,5.78) {(a) Classical sweep};
\node[anchor=west, font=\bfseries] at (6.50,5.78) {(b) Tensorised sweep};
\node[anchor=west, font=\bfseries] at (13.50,5.78) {(c) Batched wavefront update};

\begin{scope}[shift={(0.65,2.38)}]
  \foreach \i in {0,...,5}{
    \foreach \j in {0,...,5}{
      \pgfmathtruncatemacro{\s}{\i+\j}
      \ifnum\s=2
        \node[oldwavecell] at (0.43*\i,0.43*\j) {};
      \else
        \ifnum\s=3
          \node[wavecell] at (0.43*\i,0.43*\j) {};
        \else
          \node[cell] at (0.43*\i,0.43*\j) {};
        \fi
      \fi
    }
  }

  \node[activecell] at (0.43*3,0.43*2) {\scriptsize $K$};

  \draw[arrow] (-0.12,-0.12) -- (2.48,2.48);
  \node[font=\scriptsize, anchor=west] at (2.56,2.30) {sweep direction};

  \draw[karrow] (1.46,0.86) -- (3.18,0.86);
\end{scope}

\node[box, minimum width=1.95cm] at (4.58,3.24)
{$u_K\in\mathbb{R}^{n_p}$};

\node[font=\scriptsize, align=center] at (4.58,2.53)
{$n_p$ DG coefficients};

\node[font=\footnotesize, align=center, text width=4.9cm] at (2.98,1.05)
{one fixed-source channel\\one coefficient vector per cell};

\begin{scope}[shift={(6.90,2.38)}]
  \foreach \i in {0,...,5}{
    \foreach \j in {0,...,5}{
      \pgfmathtruncatemacro{\s}{\i+\j}
      \ifnum\s=2
        \node[oldwavecell] at (0.43*\i,0.43*\j) {};
      \else
        \ifnum\s=3
          \node[wavecell] at (0.43*\i,0.43*\j) {};
        \else
          \node[cell] at (0.43*\i,0.43*\j) {};
        \fi
      \fi
    }
  }

  \node[activecell] at (0.43*3,0.43*2) {\scriptsize $K$};

  \draw[arrow] (-0.12,-0.12) -- (2.48,2.48);

  \draw[karrow] (1.46,0.86) -- (3.18,0.86);
\end{scope}

\node[box, minimum width=2.35cm] at (10.83,3.24)
{$\vec U_K\in\mathbb{R}^{n_p\times n_\alpha}$};

\draw[tensorface, fill=green!20]  (9.95,4.00) rectangle (10.59,4.90);
\draw[tensorface, fill=blue!17]   (10.20,4.18) rectangle (10.84,5.08);
\draw[tensorface, fill=yellow!30] (10.45,4.36) rectangle (11.09,5.26);
\draw[tensorface, fill=orange!22] (10.70,4.54) rectangle (11.34,5.44);

\node[font=\scriptsize, align=left, anchor=west] at (11.58,4.76)
{sample\\angle\\group\\RHS};

\node[font=\scriptsize, align=center] at (10.83,2.53)
{$n_\alpha$ batch channels};

\node[font=\footnotesize, align=center, text width=5.4cm] at (9.60,1.05)
{same spatial sweep\\batch axes carried by each cell state};

\begin{scope}[shift={(13.70,2.38)}]
  \foreach \i in {0,...,5}{
    \foreach \j in {0,...,5}{
      \pgfmathtruncatemacro{\s}{\i+\j}
      \ifnum\s=2
        \node[oldwavecell] at (0.43*\i,0.43*\j) {};
      \else
        \ifnum\s=3
          \node[wavecell] at (0.43*\i,0.43*\j) {};
        \else
          \node[cell] at (0.43*\i,0.43*\j) {};
        \fi
      \fi
    }
  }

  \node[activecell] at (0.43*3,0.43*2) {\scriptsize $K$};

  \draw[arrow] (-0.12,-0.12) -- (2.48,2.48);
\end{scope}

\draw[karrow] (15.16,3.24) -- (16.50,4.58);
\draw[karrow] (15.16,3.24) -- (16.50,3.82);
\draw[karrow] (15.16,3.24) -- (16.50,3.06);

\node[algbox, minimum width=3.05cm] at (18.05,5.18)
{$n_\alpha$ batched local solves};

\foreach \x/\y/\shade in {
  16.74/4.60/35,
  17.46/4.60/30,
  18.18/4.60/24,
  18.90/4.60/18,
  16.74/3.84/35,
  17.46/3.84/30,
  18.18/3.84/24,
  18.90/3.84/18,
  16.74/3.08/35,
  17.46/3.08/30,
  18.18/3.08/24,
  18.90/3.08/18
}{
  \draw[draw=black!55, fill=yellow!\shade, thick] (\x-0.27,\y-0.27) rectangle ++(0.54,0.54);
  \draw[black!45] (\x,\y-0.27) -- ++(0,0.54);
  \draw[black!45] (\x-0.27,\y) -- ++(0.54,0);
}

\foreach \y/\shade in {4.60/12,3.84/12,3.08/12}{
  \draw[draw=black!30, fill=yellow!\shade, thick] (19.35,\y-0.27) rectangle ++(0.54,0.54);
  \draw[black!28] (19.62,\y-0.27) -- ++(0,0.54);
  \draw[black!28] (19.35,\y) -- ++(0.54,0);
}
\node[font=\Large] at (19.17,3.84) {$\cdots$};

\node[box, minimum width=4.55cm] at (16.95,1.55)
{$A_{\alpha,K}U_{\alpha,K}=b_{\alpha,K},
\qquad \alpha=1,\dots,n_\alpha$};

\node[font=\footnotesize, align=center, text width=5.05cm] at (16.95,0.80)
{independent over wavefront cells\\and batch index $\alpha$};

\end{tikzpicture}%
}
\caption{Tensorisation of the upwind DG sweep. Left: a classical fixed-source sweep
updates one local DG coefficient vector $u_K\in\mathbb{R}^{n_p}$ in each cell $K$.
Middle: the same wavefront ordering is retained for all channels in one sweep class,
but each cell carries a tensor-valued state
$\vec U_K\in\mathbb{R}^{n_p\times n_\alpha}$, where $n_\alpha$ collects the batch
channels associated with samples, sign-compatible angular ordinates, energy groups
and right-hand sides. Right: on a fixed wavefront, the local systems
$A_{\alpha,K}U_{\alpha,K}=b_{\alpha,K}$ are independent over cells on
the wavefront and over batch index $\alpha$. Angular ordinates with a different upwind
sign pattern use a different wavefront schedule.}
\label{fig:intro_tensorised_sweep}
\end{figure}

The paper makes the following contributions.

\begin{enumerate}
\item We distinguish graph compatibility from the more general notion
of schedule compatibility, give a face-sign signature for constructing
graph-compatible sweep classes, and lift cellwise block forward
substitution to a multi-axis tensor program. Local matrices, sources
and inflow data may vary across the channels sharing the spatial
schedule.

\item We give a Cartesian realisation in which wavefronts retain the
causal spatial order, upwind couplings are tensor shifts and the
cell-local systems are evaluated by batched dense kernels. Samples,
right-hand sides, sign-compatible ordinates and energy-group channels
within each fixed-source stage may be fused or microbatched according
to the available memory.

\item We establish algebraic equivalence with independent classical DG
sweeps and give the associated work, depth and storage model. We then
identify fixed-face-sign regions on which the discrete solver is smooth
and show that reverse-mode differentiation of the wavefront program
agrees with the assembled discrete adjoint action.

\item We check discretisation and implementation errors, measure
throughput and peak device memory as the active batch axes grow, verify
differentiated observables against an independently assembled triangular
system, and test simultaneous sample--angle--group batching in a
fixed-work multigroup power iteration.
\end{enumerate}

The graph-compatible construction is not specific to GPU execution.
It applies whenever a collection of block-triangular discrete problems
shares a dependency graph while retaining channel-dependent local
operators, couplings and right-hand sides. The GPU tensor
implementation studied here is one realisation of this structure;
other realisations include vectorised or batched CPU kernels,
performance-portable accelerator back ends, and on-node batching
within a distributed-memory transport sweep.

The contribution is complementary to spatial parallel-sweep algorithms.
In a distributed-memory implementation, a sweep class can be carried
inside a KBA or unstructured schedule: the established method supplies
the spatial partition and communication order, and the present tensor
layout supplies the on-node channel parallelism. It is also orthogonal
to diffusion synthetic acceleration and related methods, which reduce
the number of outer source iterations
\cite{Larsen1984DSA,AdamsMartin1992DSA}. Sweep-based production and
proxy codes such as Denovo, KRIPKE and OpenSn illustrate the broader
setting in which these kernels arise
\cite{EvansEtAl2010Denovo,KunenBaileyBrown2015Kripke,
AndrsEtAl2026OpenSn}.

The implementation uses tensor-array operations and reverse-mode
automatic differentiation. Modern array systems provide accelerator
back ends, program tracing and differentiated linear algebra
\cite{PaszkeEtAl2019PyTorch,FrostigJohnsonLeary2018Tracing,
BaydinEtAl2018ADSurvey}. Automated discrete adjoints and
differentiable finite-element solvers are well established
\cite{FarrellEtAl2013DolfinAdjoint,Giles2008MatrixDerivatives,
BlondelEtAl2022ImplicitDiff,XueEtAl2022JAXFEM}. Transport-specific
developments include differentiable GPU-based Boltzmann solvers and
differentiable Monte Carlo neutron transport
\cite{ShangEtAl2025JAXBTE,DengEtAl2026DifferentiableNeutron}.
Here the differentiated object is the deterministic block-triangular
DG sweep itself. On open fixed-face-sign regions, reverse-mode
differentiation through the wavefront program agrees exactly with the
assembled discrete adjoint; at a face-sign change, neighbouring
branches determine the limiting derivatives.

The sensitivity calculations below use mean-square pathwise derivatives
of discrete observables to construct derivative-based parameter rankings
\cite{SobolKucherenko2009DGSM,Cacuci2003}. The final multigroup
calculation evaluates combined sample--angle--group batching on a problem
constructed from C5G7 and KAIST data and geometry conventions
\cite{LewisSmithNa2001C5G7,SmithLewisNa2005C5G73D,Dahl2006PARTISN,
  Cho2000KAIST1A,KnightBryceHall2013}.

Beyond ensemble UQ, the same structure is therefore relevant to
multi-source calculations and to inverse, optimisation and
experimental design problems that require repeated differentiated
transport solves over a common sweep graph.

The rest of the paper is organised as follows.
Section~\ref{sec:model} defines the transport problem, discrete-ordinates
coupling and observables. Section~\ref{sec:analysis} gives the upwind DG
block form, dependency graph, sweep classes, tensor algorithm and cost
model. Section~\ref{sec:uq} analyses branchwise differentiability and
automatic differentiation of the discrete solver.
Section~\ref{sec:numerics} presents the numerical experiments, and
Section~\ref{sec:conclusions} draws the conclusions.

\section{Problem class, batch channels and observables}
\label{sec:model}

Let $(\Xi,\mathcal{A},\mathbb{P})$ be a probability space and let
$\xi:\Xi\to\mathbb{R}^{N_\xi}$ be a finite-dimensional random vector.
Let $d\in\qc{2,3}$ and let $D\subset\mathbb{R}^d$ be a bounded
polytopic Lipschitz domain with outward unit normal $\vec n$.
Coefficients, sources and boundary data may depend on $\xi$.

This section fixes the problem class needed by the batched wavefront
algorithm. The basic computational unit is a fixed-source transport
solve. Multigroup, angular and sampling structure enters by forming
many such fixed-source solves and grouping those with the same upwind
dependency graph into sweep classes.

\subsection{Fixed-source sweep kernel}

A single directional fixed-source solve has the standard
advection--reaction form used in discrete-ordinates transport
\cite{LewisMiller1984,AdamsLarsen2002},
\begin{equation}
  \vec b\cdot\nabla u(x)
  +
  c(x)u(x)
  =
  f(x),
  \qquad
  x\in D,
  \label{eq:model_sweep_kernel}
\end{equation}
with inflow condition
\begin{equation}
  u(x)=g(x),
  \qquad
  x\in\Gamma_-(\vec b),
  \label{eq:model_sweep_kernel_inflow}
\end{equation}
where the boundary is decomposed into
\begin{align}
  \Gamma_-(\vec b)
  &=
  \qc{x\in\partial D:\vec b\cdot\vec n(x)<0},
  \\
  \Gamma_+(\vec b)
  &=
  \qc{x\in\partial D:\vec b\cdot\vec n(x)>0},
  \\
  \Gamma_0(\vec b)
  &=
  \qc{x\in\partial D:\vec b\cdot\vec n(x)=0}.
  \label{eq:model_kernel_inflow_outflow_grazing}
\end{align}

For the scalar random examples, the fixed-source problem is
\begin{equation}
  \vec b(\xi)\cdot\nabla u(x,\xi)
  +
  c(x,\xi)u(x,\xi)
  =
  f(x,\xi),
  \qquad
  x\in D,
  \label{eq:model_scalar_transport}
\end{equation}
with inflow condition
\begin{equation}
  u(x,\xi)=g(x,\xi),
  \qquad
  x\in\Gamma_-(\vec b(\xi)).
  \label{eq:model_scalar_inflow}
\end{equation}
For a fixed realisation and a measurable subset
$\Gamma\subseteq\partial D$, define the weighted boundary space
\begin{equation}
L^2_w(\Gamma)
:=
\qc{
v:
\int_{\Gamma}\norm{w}\,\norm{v(x)}^2\,\mathrm ds<\infty
}.
\label{eq:model_weighted_inflow_space}
\end{equation}
We regard the boundary datum $g(\cdot,\xi)$ as a function on the fixed
boundary $\partial D$ and use only its restriction to the active inflow
set $\Gamma_-(\vec b(\xi))$. We assume for simplicity that
$\vec b(\xi)$ is spatially constant for each realisation and that
\begin{equation}
c(\cdot,\xi)\in L^\infty(D),
\qquad
f(\cdot,\xi)\in L^2(D),
\qquad
g(\cdot,\xi)\in L^2(\partial D).
\label{eq:model_scalar_data}
\end{equation}
In particular,
$g|_{\Gamma_-(\vec b(\xi))}
\in
L^2_{\norm{\vec b(\xi)\cdot\vec n}}
(\Gamma_-(\vec b(\xi)))$.
The use of the fixed space $L^2(\partial D)$ is convenient when the
inflow boundary changes with $\xi$. Whenever continuity,
Lipschitz continuity or differentiability with respect to $\xi$ is
invoked below, the data maps are assumed to have the corresponding
regularity into these fixed function spaces.

The reaction coefficient is assumed positive:
\begin{equation}
  c(x,\xi)\geq c_0(\xi)>0
  \qquad
  \text{for almost every }x\in D.
  \label{eq:model_scalar_positivity}
\end{equation}
When uniform estimates over the ensemble are required, we assume
\begin{equation}
  c_0(\xi)\geq c_\star>0
  \qquad
  \text{almost surely}.
  \label{eq:model_uniform_positivity}
\end{equation}

The scalar problem above represents one fixed-source channel. The
algorithmic setting of this paper is obtained by evaluating many such
channels. Independent fixed-source instances arise from Monte Carlo
samples, angular ordinates, energy groups and right-hand sides. Once
the right-hand side for a channel has been formed, the transport solve
has the form \eqref{eq:model_sweep_kernel}. Channels with the same
upwind cell-dependency graph can therefore be advanced together in one
batched wavefront sweep.

On a Cartesian mesh with spatially constant directions, equality of the
coordinate sign pattern of $\vec b$ gives the same upwind dependency
pattern. In two dimensions this gives quadrant classes; in three
dimensions it gives octant classes. The next subsection explains how
multigroup discrete-ordinates equations generate these fixed-source
channels.

\subsection{Multigroup discrete ordinates and source formation}

The multigroup discrete-ordinates equations used are obtained from the
steady linear Boltzmann transport equation by discretising angle with
a quadrature rule and energy with a finite number of groups
\cite{LewisMiller1984,AdamsLarsen2002,AndrsEtAl2026OpenSn}.  In this
setting the angular flux depends on position, direction and
energy. Scattering and fission couple directions and groups through
source terms, but after these sources have been formed each
group-ordinate equation has the same directional fixed-source form as
\eqref{eq:model_sweep_kernel}.

To that end, let
\begin{equation}
  \qc{(\vec\omega_m,w_m)}_{m=1}^{M},
  \qquad
  \vec\omega_m\in\mathbb{S}^{d-1},
  \qquad
  w_m>0,
  \label{eq:model_angular_quadrature}
\end{equation}
be the angular quadrature rule used in the discrete-ordinates
approximation. Energy groups are indexed by $\gamma=1,\dots,G$, and
$u_{\gamma,m}(x,\xi)$ denotes the angular flux in group $\gamma$ and
ordinate $m$. The scalar flux is
\begin{equation}
  \phi_\gamma(x,\xi)
  =
  \sum_{m=1}^{M}w_m u_{\gamma,m}(x,\xi).
  \label{eq:model_scalar_flux}
\end{equation}

Throughout the paper, the quadrature weights represent the normalised
angular measure and satisfy
\begin{equation}
\sum_{m=1}^{M}w_m=1.
\label{eq:model_angular_weight_normalisation}
\end{equation}
With this convention, isotropic scattering and fission sources are
independent of the ordinate index and are written as below. If a
solid-angle quadrature normalised by
$\sum_m w_m=\norm{\mathbb S^{d-1}}$ is used instead, the corresponding
reciprocal angular-measure factor is absorbed into the source
operators.

After scattering, fission or external sources have been formed, each
group-ordinate equation is a fixed-source solve:
\begin{equation}
  \vec\omega_m\cdot\nabla u_{\gamma,m}(x,\xi)
  +
  \Sigma_{t,\gamma}(x,\xi)u_{\gamma,m}(x,\xi)
  =
  Q_{\gamma,m}(x,\xi),
  \qquad
  x\in D,
  \label{eq:model_fixed_source_multigroup}
\end{equation}
with inflow condition
\begin{equation}
  u_{\gamma,m}(x,\xi)
  =
  g_{\gamma,m}(x,\xi),
  \qquad
  x\in\Gamma_{-,m},
  \label{eq:model_multigroup_inflow}
\end{equation}
where $\Gamma_{-,m} := \Gamma_-(\vec \omega_m)$. Thus
\eqref{eq:model_fixed_source_multigroup} is an instance of
\eqref{eq:model_sweep_kernel} with
\begin{equation}
  \vec b=\vec\omega_m,
  \qquad
  c=\Sigma_{t,\gamma},
  \qquad
  f=Q_{\gamma,m},
  \qquad
  g=g_{\gamma,m}.
  \label{eq:model_kernel_identification}
\end{equation}

In an isotropic multigroup setting, the scattering source is written
\begin{equation}
  (\mathcal{S}u)_\gamma(x,\xi)
  =
  \sum_{\gamma'=1}^{G}
  \Sigma_{s,\gamma'\to\gamma}(x,\xi)
  \phi_{\gamma'}(x,\xi),
  \label{eq:model_isotropic_scattering}
\end{equation}
and the fission-production source is
\begin{equation}
  (\mathcal{F}u)_\gamma(x,\xi)
  =
  \chi_\gamma(x,\xi)
  \sum_{\gamma'=1}^{G}
  \nu\Sigma_{f,\gamma'}(x,\xi)
  \phi_{\gamma'}(x,\xi).
  \label{eq:model_fission_source}
\end{equation}
The corresponding multigroup $k$-eigenvalue problem is
\begin{equation}
  \vec\omega_m\cdot\nabla u_{\gamma,m}
  +
  \Sigma_{t,\gamma}u_{\gamma,m}
  =
  (\mathcal{S}u)_\gamma
  +
  \frac{1}{k}(\mathcal{F}u)_\gamma.
  \label{eq:model_keff_problem}
\end{equation}
A power iteration forms scattering and fission sources from the current
scalar fluxes, solves the resulting fixed-source equations
\eqref{eq:model_fixed_source_multigroup}, and updates the fission
normalisation and $k$. Thus source formation couples groups and
ordinates between sweeps, but after $Q_{\gamma,m}$ has been formed,
each group-ordinate equation is again an independent fixed-source solve
within its sweep class.

\subsection{Random inputs and admissibility}

The random inputs are finite-dimensional. Uncertainty may enter through
the streaming direction, material coefficients, source terms or inflow
data. In the scalar setting, a random streaming direction is written as
\begin{equation}
  \vec b(\xi)=\beta(\xi)\vec\omega(\xi),
  \qquad
  \beta(\xi)>0,
  \qquad
  \vec\omega(\xi)\in\mathbb{S}^{d-1}.
  \label{eq:model_random_direction}
\end{equation}
In two-dimensional beam calculations we use the parametrisation
\begin{equation}
  \vec\omega(\xi)
  =
  \begin{pmatrix}
  \cos{\theta(\xi)}\\
  \sin{\theta(\xi)}
  \end{pmatrix}.
  \label{eq:model_random_angle}
\end{equation}
The coefficient, source and inflow data are allowed to depend on the
same finite-dimensional parameter vector $\xi$, provided the
fixed-source problem remains admissible. In particular, the scalar
reaction coefficient must satisfy the positivity condition
\eqref{eq:model_scalar_positivity}; when uniform estimates over the
ensemble are required, we assume \eqref{eq:model_uniform_positivity}.

In multigroup calculations, random inputs may enter the material-wise
removal, scattering and fission-production coefficients. Such
parametric material uncertainty is standard in deterministic
neutron-transport UQ; see, for example,
\cite{FichtlPrinja2011StochasticCollocation,
  GrahamParkinsonScheichl2018, PasmannEtAl2022iQMC}.We use
$\Sigma_{a,\gamma}$ for non-scattering removal and
$\nu\Sigma_{f,\gamma}$ for the fission-production coefficient
appearing in \eqref{eq:model_fission_source}. Removal is represented
through $\Sigma_{t,\gamma}$ and $\Sigma_{a,\gamma}$, whereas
$\nu\Sigma_{f,\gamma}$ enters the production source. Thus a
perturbation of $\nu\Sigma_f$ changes fission production at fixed
removal unless a corresponding removal perturbation is specified
separately; for example, it may represent a change in neutron yield at
fixed removal.

The admissibility conditions are
\begin{equation}
  \Sigma_{t,\gamma}(x,\xi)>0,
  \qquad
  \Sigma_{a,\gamma}(x,\xi)\geq 0,
  \qquad
  \Sigma_{s,\gamma'\to\gamma}(x,\xi)\geq 0,
  \qquad
  \nu\Sigma_{f,\gamma}(x,\xi)\geq 0.
  \label{eq:model_cross_section_admissibility}
\end{equation}
Here
$\Sigma_{s,\gamma\to\gamma'}$
denotes scattering out of group $\gamma$ into group $\gamma'$.
The loss coefficients are required to satisfy
\begin{equation}
\Sigma_{t,\gamma}(x,\xi)
\geq
\Sigma_{a,\gamma}(x,\xi)
+
\sum_{\gamma'=1}^{G}
\Sigma_{s,\gamma\to\gamma'}(x,\xi).
\label{eq:model_total_cross_section_consistency}
\end{equation}
Any positive difference represents additional removal not resolved by
the displayed absorption and scattering coefficients.

\subsection{Observables}

Let $u(\cdot,\xi)$ denote the solution of one or more fixed-source
transport problems associated with the parameter value $\xi$. We write a
scalar observable in the form
\begin{equation}
  Y(\xi)
  =
  J(u(\cdot,\xi),\xi).
  \label{eq:model_observable}
\end{equation}
Here $J$ denotes a functional of the transport solution and, possibly,
of the sampled coefficients, source data, inflow data or detector
geometry. Typical examples include outflow currents, volume averages,
absorption functionals, effective multiplication factors and pin-wise
fission production. Concrete choices are specified in the
numerical experiments.

\section{Tensorised upwind DG sweep}
\label{sec:analysis}

This section describes the discrete sweep structure used by the tensor
implementation. We first write the upwind DG equations for one
fixed-source channel and express the resulting system in cellwise block
form. The upwind numerical flux defines a directed cell dependency
graph. If this graph is acyclic, the assembled system is block lower
triangular after a topological ordering and can be solved by a
wavefront sweep. The tensorised algorithm applies this same block
forward substitution over all independent channels that share the same
dependency graph.

Throughout this section, one fixed-source channel is fixed unless batch
indices are explicitly displayed. Thus $\vec b$, $c$, $f$ and $g$
denote the direction, reaction coefficient, source and inflow data for
one sample, ordinate, group and right-hand side.

\subsection{Upwind DG equations and cell blocks}

Let $\mathcal{T}_h$ be a Cartesian mesh of $D$, with cells $K$. Let
$\mathcal{F}_h^{\mathrm{int}}$ and $\mathcal{F}_h^{\partial}$ denote
the sets of interior and boundary faces. For each cell $K$, let $\vec
n_K$ be the outward unit normal and define
\begin{align}
\partial K_+
&=
\qc{x\in\partial K:\vec b\cdot\vec n_K(x)>0},
\\
\partial K_-
&=
\qc{x\in\partial K:\vec b\cdot\vec n_K(x)<0},
\\
\partial K_0
&=
\qc{x\in\partial K:\vec b\cdot\vec n_K(x)=0}.
\end{align}
Faces in $\partial K_0$ are grazing faces. They carry zero normal flux
and create no upwind dependency.

For an integer $p\geq 0$, let
\begin{equation}
V_h
=
\qc{v_h\in L^2(D):v_h|_K\in \mathbb{Q}_p(K)\text{ for all }K\in\mathcal{T}_h},
\label{eq:analysis_vh}
\end{equation}
where $\mathbb{Q}_p(K)$ denotes the tensor-product polynomial space of degree
at most $p$ in each coordinate on $K$. On a non-grazing element
boundary face, the upwind trace of a trial function $w_h$ is
\begin{equation}
w_h^{\mathrm{up}}
=
\begin{cases}
w_h|_K, & \vec b\cdot\vec n_K>0,\\
w_h|_{K'}, & \vec b\cdot\vec n_K<0\text{ and }F=K\cap K',\\
g, & \vec b\cdot\vec n_K<0\text{ and }F\subset\partial D.
\end{cases}
\label{eq:analysis_upwind_trace}
\end{equation}
We use the classical upwind discontinuous Galerkin construction for
first-order transport \cite{ReedHill1973,LesaintRaviart1974,
  JohnsonPitkaranta1986,Cockburn1998IntroDG}. The upwind DG method is
to find $u_h\in V_h$ such that
\begin{equation}
a_h(u_h,v_h)=\ell_h(v_h)
\qquad
\text{for all }v_h\in V_h,
\label{eq:analysis_dg_problem}
\end{equation}
where
\begin{equation}
\begin{split}
a_h(w_h,v_h)
&=
\sum_{K\in\mathcal{T}_h}
\qb{
-\int_K
w_h\vec b\cdot\nabla v_h dx
+
\int_K
c w_h v_h dx
+
\int_{\partial K_+}
(\vec b\cdot\vec n_K)w_h v_h ds
}
\\
&\quad+
\sum_{K\in\mathcal{T}_h}
\int_{\partial K_-\setminus\partial D}
(\vec b\cdot\vec n_K)w_h^{\mathrm{up}}v_h ds,
\end{split}
\label{eq:analysis_dg_bilinear}
\end{equation}
and
\begin{equation}
\ell_h(v_h)
=
\sum_{K\in\mathcal{T}_h}
\int_K f v_h dx
-
\sum_{K\in\mathcal{T}_h}
\int_{\partial K_-\cap\partial D}
(\vec b\cdot\vec n_K)g v_h ds.
\label{eq:analysis_dg_linear}
\end{equation}
Physical inflow data are therefore imposed weakly through the boundary
flux.

The natural DG norm is
\begin{equation}
\Norm{v_h}_{\mathrm{DG}}^2
=
\int_D c v_h^2 dx
+
\frac{1}{2}
\sum_{F\in\mathcal{F}_h^{\mathrm{int}}}
\int_F \norm{\vec b\cdot\vec n_F}\jump{v_h}^2 ds
+
\frac{1}{2}
\int_{\partial D}
\norm{\vec b\cdot\vec n}v_h^2 ds,
\label{eq:analysis_dg_norm}
\end{equation}
where, for an interior face with fixed normal $\vec n_F$, $\jump{v_h}$
denotes the jump across the face. Grazing faces make no contribution.

\begin{proposition}[Discrete stability]
\label{prop:analysis_discrete_stability}
Assume that $\vec b$ is spatially constant and that $c(x)\geq c_0>0$
almost everywhere in $D$. Then, for all $v_h\in V_h$,
\begin{equation}
  a_h(v_h,v_h)=\Norm{v_h}_{\mathrm{DG}}^2.
\label{eq:analysis_dg_coercivity_identity}
\end{equation}
Consequently, the DG solution satisfies
\begin{equation}
\Norm{u_h}_{\mathrm{DG}}
\leq
C
\qp{
\Norm{f}_{L^2(D)}
+
\Norm{g}_{L^2_{\norm{\vec b\cdot\vec n}}(\Gamma_-)}
},
\label{eq:analysis_dg_stability}
\end{equation}
where $C$ depends on $c_0$ and $D$, but not on $h$.
\end{proposition}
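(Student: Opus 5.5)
The identity \eqref{eq:analysis_dg_coercivity_identity} follows from cellwise integration by parts followed by a regrouping of face contributions. The stability bound \eqref{eq:analysis_dg_stability} then follows by testing with $v_h=u_h$ and applying Cauchy--Schwarz to $\ell_h$.

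\emph{Step 1 (cellwise identity).} Fix $K$ and $v_h\in V_h$. Since $\vec b$ is constant, $\vec b\cdot\nabla(v_h^2)=2v_h\,\vec b\cdot\nabla v_h$. The divergence theorem on $K$ therefore gives
\begin{equation*}
-\int_K v_h\,\vec b\cdot\nabla v_h\,dx=-\frac12\int_{\partial K}(\vec b\cdot\vec n_K)v_h^2\,ds .
\end{equation*}
We split $\partial K$ into $\partial K_+$, $\partial K_-$ and $\partial K_0$; the grazing part contributes nothing. Adding the outflow term $\int_{\partial K_+}(\vec b\cdot\vec n_K)v_h^2\,ds$, the volume and own-trace part of the $K$-th summand becomes
\begin{equation*}
\int_K c v_h^2\,dx+\frac12\int_{\partial K_+}\norm{\vec b\cdot\vec n_K}\,(v_h|_K)^2\,ds+\frac12\int_{\partial K_-}\norm{\vec b\cdot\vec n_K}\,(v_h|_K)^2\,ds .
\end{equation*}
The remaining interior inflow term is
\begin{equation*}
\int_{\partial K_-\setminus\partial D}(\vec b\cdot\vec n_K)v_h^{\mathrm{up}}v_h\,ds=-\int_{\partial K_-\setminus\partial D}\norm{\vec b\cdot\vec n_K}\,v_h|_{K'}\,v_h|_K\,ds .
\end{equation*}

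\emph{Step 2 (face regrouping).} We sum over $K$ and collect contributions face by face. On a non-grazing interior face $F=K\cap K'$ with upwind cell $K'$ and downwind cell $K$, write $a=v_h|_{K'}$ and $d=v_h|_K$. The face then receives three pieces:
\begin{equation*}
\tfrac12\norm{\vec b\cdot\vec n_F}a^2 \text{ (outflow of $K'$)},\qquad \tfrac12\norm{\vec b\cdot\vec n_F}d^2 \text{ (inflow of $K$)},\qquad -\norm{\vec b\cdot\vec n_F}ad .
\end{equation*}
These sum to $\tfrac12\norm{\vec b\cdot\vec n_F}(a-d)^2=\tfrac12\norm{\vec b\cdot\vec n_F}\jump{v_h}^2$.

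A boundary face lies in either $\Gamma_+$ or $\Gamma_-$. In both cases it carries only the term $\tfrac12\norm{\vec b\cdot\vec n}v_h^2$, because $a_h$ contains no cross term on $\partial D$ (inflow data enter only through $\ell_h$). Grazing interior and boundary faces contribute zero, consistent with \eqref{eq:analysis_dg_norm}. Summing over all faces yields $a_h(v_h,v_h)=\Norm{v_h}_{\mathrm{DG}}^2$.

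\emph{Step 3 (stability).} Set $v_h=u_h$ in \eqref{eq:analysis_dg_problem}. For the volume term, $c\ge c_0$ gives
\begin{equation*}
\int_D f u_h\,dx\le c_0^{-1/2}\Norm{f}_{L^2(D)}\Norm{\sqrt c\,u_h}_{L^2(D)} .
\end{equation*}
For the boundary term, Cauchy--Schwarz with weight $\norm{\vec b\cdot\vec n}$ gives
\begin{equation*}
\Bigl|\int_{\Gamma_-}(\vec b\cdot\vec n)g u_h\,ds\Bigr|\le\Norm{g}_{L^2_{\norm{\vec b\cdot\vec n}}(\Gamma_-)}\Bigl(\int_{\Gamma_-}\norm{\vec b\cdot\vec n}u_h^2\,ds\Bigr)^{1/2}.
\end{equation*}
The last factor is at most $\sqrt2\,\Norm{u_h}_{\mathrm{DG}}$. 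Combining both estimates,
\begin{equation*}
\Norm{u_h}_{\mathrm{DG}}^2\le\max\qc{c_0^{-1/2},\sqrt2}\qp{\Norm{f}_{L^2(D)}+\Norm{g}_{L^2_{\norm{\vec b\cdot\vec n}}(\Gamma_-)}}\Norm{u_h}_{\mathrm{DG}} .
\end{equation*}
Dividing by $\Norm{u_h}_{\mathrm{DG}}$ (the case $\Norm{u_h}_{\mathrm{DG}}=0$ is trivial) gives \eqref{eq:analysis_dg_stability}. The constant $C=\max\qc{c_0^{-1/2},\sqrt2}$ is independent of $h$. Positivity of $c$ also makes $\Norm{\cdot}_{\mathrm{DG}}$ a norm on $V_h$, so the identity gives injectivity and hence well-posedness of the discrete problem.

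The main obstacle is Step 2: the sign bookkeeping on each face. One must check that the upwind cell's outflow half-term, the downwind cell's inflow half-term and the cross term combine exactly into a square. One must also check that boundary inflow faces produce $+\tfrac12\norm{\vec b\cdot\vec n}v_h^2$ and not a cross term. Beyond this, the argument is routine.
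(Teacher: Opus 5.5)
Your proof is correct and follows essentially the same route as the paper: elementwise integration by parts with constant $\vec b$, recombination of the two one-sided traces and the upwind cross term into $\tfrac12\norm{\vec b\cdot\vec n_F}\jump{v_h}^2$ on interior faces (with $\tfrac12\norm{\vec b\cdot\vec n}v_h^2$ on boundary faces), then testing with $u_h$ and applying Cauchy--Schwarz. The only difference is cosmetic: you divide by $\Norm{u_h}_{\mathrm{DG}}$ rather than absorbing terms with Young's inequality, which gives the explicit, $D$-independent constant $\max\qc{c_0^{-1/2},\sqrt2}$.
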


\begin{proof}
The identity follows by taking $w_h=v_h$ in
\eqref{eq:analysis_dg_bilinear}, integrating the advection term by
parts elementwise, and using $\nabla\cdot\vec b=0$. The two traces on
each interior face combine with the upwind flux to give
$\frac{1}{2}\int_F\norm{\vec b\cdot\vec n_F}\jump{v_h}^2ds$, while boundary faces
give $\frac{1}{2}\int_F\norm{\vec b\cdot\vec n}v_h^2ds$. The stability
estimate follows by testing \eqref{eq:analysis_dg_problem} with $u_h$
and applying Cauchy--Schwarz and Young's inequality to the volume and
inflow terms.
\end{proof}

For a shape-regular, quasi-uniform Cartesian mesh family and an exact
solution satisfying
$u|_K\in H^{p+1}(K)$ on every cell, the classical upwind DG analysis
under the coefficient assumptions above gives the reference
transport-norm estimate
\begin{equation}
\Norm{ u-u_h}_{\mathrm{DG}}
\leq
C h^{p+1/2}
\Norm{ u}_{H^{p+1}(\mathcal{T}_h)},
\label{eq:analysis_dg_error}
\end{equation}
where $C$ is independent of $h$; see, for example,
\cite{JohnsonPitkaranta1986,CockburnDongGuzman2008}.

Let $n_p=(p+1)^d$ and let $\qc{\varphi_r^K}_{r=1}^{n_p}$ be a basis for
$\mathbb{Q}_p(K)$. With
\begin{equation}
u_h|_K(x)
=
\sum_{s=1}^{n_p}
U_{K,s}\varphi_s^K(x),
\label{eq:analysis_local_expansion}
\end{equation}
testing with the local basis functions gives one block equation per
cell:
\begin{equation}
A_K U_K
=
R_K
+
\sum_{K'\in\mathcal{U}(K)}
B_{K,K'}U_{K'}
+
G_K.
\label{eq:analysis_cell_block_equation}
\end{equation}
Here $\mathcal{U}(K)$ is the set of upwind neighbours of $K$. The
diagonal block, volume source, upstream coupling block and inflow
boundary vector are
\begin{align}
(A_K)_{rs}
&=
-\int_K
\varphi_s^K\vec b\cdot\nabla\varphi_r^K dx
+
\int_K
c\varphi_s^K\varphi_r^K dx
+
\int_{\partial K_+}
(\vec b\cdot\vec n_K)
\varphi_s^K\varphi_r^K ds,
\label{eq:analysis_local_diagonal_block}
\\
(R_K)_r
&=
\int_K f\varphi_r^K dx,
\label{eq:analysis_local_source_vector}
\\
(B_{K,K'})_{rs}
&=
-\int_F
(\vec b\cdot\vec n_K)
\varphi_s^{K'}\varphi_r^K ds
=
\int_F
\norm{\vec b\cdot\vec n_K}
\varphi_s^{K'}\varphi_r^K ds,
\label{eq:analysis_upwind_coupling_block}
\\
(G_K)_r
&=
-\int_{\partial K_-\cap\partial D}
(\vec b\cdot\vec n_K)g\varphi_r^K ds.
\label{eq:analysis_boundary_inflow_vector}
\end{align}
\footnote{The numbering of \eqref{eq:analysis_upwind_coupling_block}
is, of course, entirely coincidental.}  In
\eqref{eq:analysis_upwind_coupling_block}, $F=K\cap K'$ and $\vec
b\cdot\vec n_K<0$, so that $K'$ is upstream of $K$.

\begin{lemma}[Local block nonsingularity]
\label{lem:analysis_local_block_nonsingularity}
Assume that $\vec b$ is constant and $c(x)\geq c_0>0$ almost everywhere
on $K$. Then the local block $A_K$ is nonsingular.
\end{lemma}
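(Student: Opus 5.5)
It suffices to show that $A_K$ has trivial kernel, since it is square of size $n_p$. We do this with a local energy identity, the cellwise analogue of the coercivity identity \eqref{eq:analysis_dg_coercivity_identity} in Proposition~\ref{prop:analysis_discrete_stability}. Take $V\in\mathbb{R}^{n_p}$ and set $v=\sum_s V_s\varphi_s^K\in\mathbb{Q}_p(K)$. By \eqref{eq:analysis_local_diagonal_block},
\begin{equation*}
V^{\top}A_KV
=
-\int_K v\,\vec b\cdot\nabla v\,dx
+
\int_K c v^2dx
+
\int_{\partial K_+}(\vec b\cdot\vec n_K)v^2ds .
\end{equation*}

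Next evaluate the advection term. Because $\vec b$ is constant, $v\,\vec b\cdot\nabla v=\tfrac12\nabla\cdot(\vec b v^2)$. The divergence theorem on $K$ then gives
\begin{equation*}
-\int_K v\,\vec b\cdot\nabla v\,dx=-\tfrac12\int_{\partial K}(\vec b\cdot\vec n_K)v^2ds .
\end{equation*}
Split $\partial K$ into $\partial K_+$, $\partial K_-$ and $\partial K_0$; the grazing part contributes zero. Combining with the outflow term yields
\begin{equation*}
V^{\top}A_KV
=
\int_K c v^2dx
+
\tfrac12\int_{\partial K_+}\norm{\vec b\cdot\vec n_K}v^2ds
+
\tfrac12\int_{\partial K_-}\norm{\vec b\cdot\vec n_K}v^2ds
\;\geq\; c_0\Norm{v}_{L^2(K)}^2 .
\end{equation*}

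Now conclude. If $A_KV=0$, then $V^\top A_KV=0$, so $\Norm{v}_{L^2(K)}=0$. Hence $v=0$, and since the $\varphi_s^K$ form a basis, $V=0$. This gives injectivity and therefore nonsingularity.

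The main obstacle is justifying the integration by parts. Here it is mild: $v$ is a polynomial on a Cartesian cell, so $v^2$ is smooth and the divergence theorem applies directly. The only care needed is the bookkeeping of faces, namely that $\partial K_0$ drops out and that the $\partial K_-$ term carries the correct sign. The argument does not need $A_K$ to be symmetric: positivity of the quadratic form alone rules out a nontrivial kernel.
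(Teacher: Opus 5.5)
Your proof is correct and takes the same approach as the paper: integrate by parts elementwise to get $V^\top A_K V=\int_K c v^2\,dx+\tfrac12\int_{\partial K}|\vec b\cdot\vec n_K|\,v^2\,ds$, then use positivity of $c$ and linear independence of the basis to conclude $V=0$. The only difference is that you write out the face bookkeeping between $\partial K_+$, $\partial K_-$ and $\partial K_0$, which the paper leaves implicit.
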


\begin{proof}
For $V\in\mathbb{R}^{n_p}$, let
$v=\sum_s V_s\varphi_s^K$. Then
\begin{equation}
V^T A_K V
=
\int_K c v^2 dx
+
\frac{1}{2}
\int_{\partial K}
\norm{\vec b\cdot\vec n_K}v^2 ds,
\end{equation}
by the same elementwise integration-by-parts argument used above. If
$A_KV=0$, then the right-hand side vanishes, hence $v=0$ in $K$ by
positivity of $c$. Since the basis is linearly independent, $V=0$.
\end{proof}

The block equations \eqref{eq:analysis_cell_block_equation}, assembled
over all cells, are equivalent to the upwind DG formulation
\eqref{eq:analysis_dg_problem}: a test function supported on one cell
selects one block row, and summing the block rows reconstructs the
global variational problem.

\subsection{Dependency graph and sweep classes}
\label{subsec:analysis_sweep_classes}

The upwind fluxes define a directed dependency graph
\begin{equation}
  \mathcal{G}_{h,\vec b}
  =
  (\mathcal{V}_h,\mathcal{E}_{h,\vec b}),
\end{equation}
whose vertices are the cells of $\mathcal{T}_h$. For two distinct cells
$K$ and $K'$, there is a directed edge
\begin{equation}
  K'\to K
  \qquad
  \Longleftrightarrow
  \qquad
  \vec b\cdot\vec n_K<0
  \quad\text{on }F=K\cap K'.
  \label{eq:analysis_graph_edge}
\end{equation}
Thus an edge is directed from the upwind cell $K'$ to the downstream
cell $K$ whenever the local equation on $K$ uses the trace from $K'$.
Boundary inflow faces contribute prescribed data but do not create
cell-to-cell graph edges, while grazing faces create no
edge. Figure~\ref{fig:analysis_dependency_graph} illustrates this
construction for a Cartesian mesh.

\begin{figure}[h!]
\centering

\begin{tikzpicture}[
  x=1.15cm,
  y=1.15cm,
  >=Latex,
  line cap=round,
  line join=round,
  primal/.style={
    draw=darkgrey,
    line width=0.8pt
  },
  dualnode/.style={
    circle,
    fill=darkgrey,
    draw=darkgrey,
    inner sep=1.5pt
  },
  depedge/.style={
    ->,
    draw=darkorange,
    line width=1.0pt
  },
  flowarrow/.style={
    ->,
    draw=darkgrey,
    line width=1.2pt
  },
  inflow/.style={
    draw=softblue,
    line width=1.5pt
  },
  wavefront/.style={
    dashed,
    draw=softgrey,
    line width=0.8pt
  },
  lab/.style={
    font=\small,
    text=darkgrey
  },
  smalllab/.style={
    font=\scriptsize,
    text=darkgrey
  }
]

\begin{scope}

  \draw[primal] (0,0) rectangle (4,4);

  \foreach \i in {1,2,3}{
    \draw[primal] (\i,0) -- (\i,4);
  }

  \foreach \j in {1,2,3}{
    \draw[primal] (0,\j) -- (4,\j);
  }

  \fill[softorange2!38] (2,2) rectangle (3,3);
  \fill[softorange2!18] (1,2) rectangle (2,3);
  \fill[softorange2!18] (2,1) rectangle (3,2);

  \node[lab]      at (2.5,2.5) {$K$};
  \node[smalllab] at (1.5,2.5) {$K'$};
  \node[smalllab] at (2.5,1.5) {$K'$};

  \draw[inflow] (-0.04,0) -- (-0.04,4);
  \draw[inflow] (0,-0.04) -- (4,-0.04);
 \node[
    smalllab,
    text=softblue,
    rotate=90
  ] at (-0.32,2) {inflow};

  \node[
    smalllab,
    text=softblue
  ] at (2,-0.30) {inflow};

  \draw[flowarrow]
    (2.65,3.35) -- (3.55,3.80);

  \node[lab, anchor=west]
    at (3.58,3.80)
    {$\vec b$};

  \draw[
    ->,
    draw=darkgrey,
    line width=0.8pt
  ]
    (2.02,2.1) -- (1.72,2.1);

  \node[smalllab]
    at (1.78,2.25)
    {$\vec n_K$};

  \draw[
    ->,
    draw=darkorange,
    line width=1.1pt
  ]
    (1.60,2.5) -- (2.34,2.5);

  \draw[
    ->,
    draw=darkorange,
    line width=1.1pt
  ]
    (2.5,1.60) -- (2.5,2.34);

  \node[lab]
    at (2,-0.50)
    {Primal mesh $\mathcal{T}_h$};

\end{scope}

\begin{scope}[shift={(6.25,0)}]

  \draw[primal, opacity=0.16]
    (0,0) rectangle (4,4);

  \foreach \i in {1,2,3}{
    \draw[primal, opacity=0.16]
      (\i,0) -- (\i,4);
  }

  \foreach \j in {1,2,3}{
    \draw[primal, opacity=0.16]
      (0,\j) -- (4,\j);
  }

  \begin{scope}
    \clip (0,0) rectangle (4,4);

    \foreach \c in {
      1.5,2.5,3.5,4.5,5.5,
      6.5,7.5,8.5,9.5,10.5
    }{
      \draw[wavefront]
        (-1,{\c+2}) -- (6,{\c-12});
    }
  \end{scope}

  \fill[
    softorange2!38,
    draw=softorange,
    line width=0.8pt
  ]
    (2.08,2.08) rectangle (2.92,2.92);

  \foreach \i in {1,...,4}{
    \foreach \j in {1,...,4}{
      \node[dualnode]
        (v-\i-\j)
        at ({\i-0.5},{\j-0.5}) {};
    }
  }

  \foreach \i in {1,...,3}{
    \pgfmathtruncatemacro{\ip}{\i+1}
    \foreach \j in {1,...,4}{
      \draw[depedge]
        (v-\i-\j) -- (v-\ip-\j);
    }
  }

  \foreach \j in {1,...,3}{
    \pgfmathtruncatemacro{\jp}{\j+1}
    \foreach \i in {1,...,4}{
      \draw[depedge]
        (v-\i-\j) -- (v-\i-\jp);
    }
  }

  \node[
    lab,
    fill=softorange2!70,
    inner sep=1.5pt
  ] at (2.5,2.5) {$K$};

  \draw[flowarrow]
    (2.65,3.35) -- (3.55,3.80);

  \node[lab, anchor=west]
    at (3.58,3.80)
    {$\vec b$};

  \node[lab]
    at (2,-0.50)
    {Dependency graph $\mathcal{G}_{h,\vec b}$};

\end{scope}

\end{tikzpicture}

\caption{ An illustration of a Cartesian primal mesh and associated
  directed cell-dependency graph. Graph vertices are placed at cell
  barycentres and orange arrows point from upwind to downstream
  cells. The dashed grey lines indicate the admissible topological
  levels in the right panel.  }
\label{fig:analysis_dependency_graph}
\end{figure}

The dependency graph is determined by the upwind face signs. In
particular, every constant direction with $b_x>0$ and $b_y>0$ induces
the same directed graph as the example in
Figure~\ref{fig:analysis_dependency_graph}. This observation is the
basis for grouping several fixed-source problems into a common sweep
class.

\begin{definition}[Sweepability]
The pair $(\mathcal{T}_h,\vec b)$ is called \emph{sweepable} if
$\mathcal{G}_{h,\vec b}$ is acyclic. A \emph{sweep ordering} is any
topological ordering $K_1,\ldots,K_{N_{\mathrm{cell}}}$ of this graph,
so that every edge $K_i\to K_j$ satisfies $i<j$.
\end{definition}

On Cartesian meshes with spatially constant $\vec b$, the dependency
graph is acyclic. On more general meshes, or for spatially varying
streaming fields, cycles may occur. Dependency-graph construction,
topological sweeping and the treatment of cyclic dependencies are
standard components of deterministic transport sweep algorithms
\cite{Pautz2002,PlimptonEtAl2005,
  VermaakEtAl2021,CamminadyFrank2018Sweepable}. Such cycles can be
handled by collapsing strongly connected components into larger
blocks, or by using the sweep as part of an iterative solver or
preconditioner. The present analysis concerns the sweepable case.

For a fixed-source channel $\alpha$, write
\begin{equation}
  \mathcal{G}_{h,\alpha}
  :=
  \mathcal{G}_{h,\vec b_\alpha}
  =
  (\mathcal{V}_h,\mathcal{E}_{h,\alpha}).
\end{equation}

\begin{definition}[Graph and schedule compatibility]
Two channels $\alpha$ and $\beta$ are \emph{graph compatible} if
\begin{equation}
  \mathcal{E}_{h,\alpha}
  =
  \mathcal{E}_{h,\beta}.
\end{equation}
A collection $\mathcal{C}$ of channels is \emph{schedule compatible}
if the union graph
\begin{equation}
  \mathcal{G}^{\cup}_{h,\mathcal{C}}
  :=
  \qp{
    \mathcal{V}_h,
    \bigcup_{\alpha\in\mathcal{C}}
    \mathcal{E}_{h,\alpha}
  }
\end{equation}
is acyclic. In that case, every topological ordering of
$\mathcal{G}^{\cup}_{h,\mathcal{C}}$ is a valid ordering for every
channel in $\mathcal{C}$.
\end{definition}

Every graph-compatible collection is schedule compatible, although
distinct dependency graphs may also admit a common topological
ordering. The implementation below uses graph-compatible sweep
classes. Reuse of a common spatial sweep over angular or energy
channels is also exploited in deterministic transport implementations
\cite{AdamsEtAl2019OptimalSweeps, SuauEtAl2024SharedMemory,
  SuauEtAl2025Vectorized}.  The distinction here is that the class is
constructed from the common dependency graph and may additionally
carry sample-dependent local operators and data.

Let
\begin{equation}
  \alpha=(n,m,\gamma,r)
\end{equation}
denote the combined batch index for sample, angular ordinate, energy
group and right-hand side, respectively. Fix one orientation
$\vec n_F$ on every interior face and define the interior-face sign
signature
\begin{equation}
  \sigma^{\mathrm{int}}_\alpha
  :=
  \qp{
    \operatorname{sign}
    (\vec b_\alpha\cdot\vec n_F)
  }_{F\in\mathcal{F}^{\mathrm{int}}_h}.
\end{equation}
Under the standing assumption that each streaming direction is
spatially constant, equal signatures induce equal directed dependency
graphs. Boundary-face signs are not included in
$\sigma^{\mathrm{int}}_\alpha$: they determine the physical inflow
faces and hence the inflow vectors, but they do not create
cell-to-cell graph edges.

The graph-compatible partition is therefore obtained by grouping equal
sign signatures:
\begin{equation}
  \mathcal{I}_\alpha
  =
  \bigcup_{s\in\mathcal{S}}
  \mathcal{I}^{(s)}_\alpha,
  \qquad
  \mathcal{I}^{(s)}_\alpha
  \cap
  \mathcal{I}^{(s')}_\alpha
  =
  \emptyset
  \quad
  \text{for }s\neq s',
\end{equation}
where every class uses one chosen topological ordering of its common
graph. A direct implementation computes or hashes
$\sigma^{\mathrm{int}}_\alpha$ for each channel, groups equal
signatures, constructs one graph for each distinct signature, and
performs one topological sort per class. Assuming constant-time face
sign evaluations, the preprocessing cost is
\begin{equation}
  \mathcal{O}
  \qp{
    n_\alpha
    \norm{\mathcal{F}^{\mathrm{int}}_h}
    +
    \sum_{s\in\mathcal{S}}
    \qp{
      N_{\mathrm{cell}}
      +
      \norm{\mathcal{E}^{(s)}_h}
    }
  }.
\end{equation}

On a Cartesian mesh with spatially constant directions, the full
interior-face signature reduces to the coordinate sign pattern
\begin{equation}
  s(\alpha)
  =
  \qp{
    \operatorname{sign}(b_{\alpha,1}),
    \ldots,
    \operatorname{sign}(b_{\alpha,d})
  }
  \in
  \qc{-1,0,+1}^d
  \setminus\qc{(0,\ldots,0)}.
\end{equation}
The corresponding graph-compatible class is
\begin{equation}
  \mathcal{I}^{(s)}_\alpha
  =
  \qc{
    \alpha\in\mathcal{I}_\alpha:
    \operatorname{sign}(b_{\alpha,q})=s_q,
    \quad q=1,\ldots,d
  }.
\end{equation}
The value $s_q=0$ means that faces normal to coordinate direction $q$
are grazing for that channel and therefore create no dependency in
that coordinate direction.

For one sweep class, the solution coefficients may be stored as
\begin{equation}
U^{(s)}
\in
\mathbb{R}^{n_\alpha^{(s)}\times N_{\mathrm{cell}}\times n_p},
\qquad
n_\alpha^{(s)}=\norm{\mathcal{I}_\alpha^{(s)}}.
\label{eq:analysis_flat_solution_tensor}
\end{equation}
Throughout, $A_{\alpha,K}^{-1}b_{\alpha,K}$ denotes the action of
solving the local linear system. It does not require the explicit
inverse matrix to be formed. For each batch entry $\alpha$ and cell $K$, the local system is
\begin{equation}
A_{\alpha,K}U_{\alpha,K}
=
R_{\alpha,K}
+
\sum_{K'\in\mathcal U_\alpha(K)}
B_{\alpha,K,K'}U_{\alpha,K'}
+
G_{\alpha,K}.
\label{eq:analysis_batched_local_update}
\end{equation}
The matrices may depend on the sample, group, material and cell. The
dependency graph may depend on the angular direction; this is why the
update is applied separately within each sweep class.

On Cartesian meshes with a constant direction, the upstream neighbours
are coordinate shifts. If $\vec i=(i_1,\dots,i_d)$ and
$s_q=\operatorname{sign}(b_q)$, then the upwind neighbour in an active
coordinate direction $q$ with $s_q\neq0$ is $\vec i-s_q\vec e_q,$
provided that this cell lies in the mesh. Thus the neighbour terms in
\eqref{eq:analysis_batched_local_update} can be implemented by tensor
shifts over the active coordinate directions.

\subsection{Batched wavefront algorithm and exactness}

The loop over wavefronts is sequential because it follows the
transport ordering. Within a wavefront, the updates are independent
over cells in $\mathcal{W}^{(s)}_k$ and over batch indices
$\alpha$. The local right-hand sides and dense block solves can
therefore be evaluated as batched tensor operations as in Algorithm
\ref{alg:batched_wavefront_sweep}.

\begin{algorithm}[h!]
\caption{Batched tensor wavefront DG sweep by sweep class}
\label{alg:batched_wavefront_sweep}
\begin{algorithmic}[1]
\Require Sweep-class partition
$\mathcal{I}_\alpha=\bigcup_{s\in\mathcal{S}}\mathcal{I}_\alpha^{(s)}$,
wavefronts
$\mathcal{W}^{(s)}_1,\dots,\mathcal{W}^{(s)}_{N_W^{(s)}}$
for each class $s$, upwind neighbour sets $\mathcal{U}^{(s)}(K)$,
local blocks $A_{\alpha,K}$, coupling blocks $B_{\alpha,K,K'}$,
source vectors $R_{\alpha,K}$, and inflow vectors $G_{\alpha,K}$
\Ensure Batched DG coefficients $U_{\alpha,K}$

\ForAll{sweep classes $s\in\mathcal{S}$}
  \State Initialise the class tensor
  $U^{(s)}\in
  \mathbb{R}^{n_\alpha^{(s)}\times N_{\mathrm{cell}}\times n_p}$.
  \For{$k=1,\dots,N_W^{(s)}$}
    \ForAll{$K\in\mathcal{W}^{(s)}_k$ and
    $\alpha\in\mathcal{I}_\alpha^{(s)}$}
      \State Form
      \[
      b_{\alpha,K}
      =
      R_{\alpha,K}
      +
      \sum_{K'\in\mathcal U^{(s)}(K)}
      B_{\alpha,K,K'}U_{\alpha,K'}
      +
      G_{\alpha,K}.
      \]
      \State Solve the local system
      \[
      A_{\alpha,K}U_{\alpha,K}=b_{\alpha,K}.
      \]
    \EndFor
  \EndFor
\EndFor
\State Return $\qc{U^{(s)}}_{s\in\mathcal{S}}$.
\end{algorithmic}
\end{algorithm}

\begin{lemma}[Exactness of a topological sweep]
\label{lem:analysis_topological_sweep_exactness}
Assume that $(\mathcal{T}_h,\vec b)$ is sweepable and that every local
diagonal block $A_K$ is nonsingular. Let
$K_1,\dots,K_{N_{\mathrm{cell}}}$ be any topological ordering of
$\mathcal{G}_{h,\vec b}$. Then applying
\eqref{eq:analysis_cell_block_equation} successively for
$K_1,\dots,K_{N_{\mathrm{cell}}}$ computes the exact solution of the
assembled fixed-source DG system, up to floating-point and local solve
errors.
\end{lemma}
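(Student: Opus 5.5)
The argument is block forward substitution. Reorder the global unknown vector so that the cell blocks appear in the topological order $K_1,\dots,K_{N_{\mathrm{cell}}}$. The assembled system then takes the form $\mathbf{A}\mathbf{U}=\mathbf{R}+\mathbf{G}$. By the equivalence noted after Lemma~\ref{lem:analysis_local_block_nonsingularity}, its $i$-th block row is exactly \eqref{eq:analysis_cell_block_equation} for $K_i$:
\[
A_{K_i}U_{K_i}-\sum_{K'\in\mathcal U(K_i)}B_{K_i,K'}U_{K'}=R_{K_i}+G_{K_i}.
\]

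\textbf{Step 1: block lower-triangular structure.} By \eqref{eq:analysis_graph_edge}, $K'\in\mathcal U(K_i)$ holds exactly when $K'\to K_i$ is an edge of $\mathcal G_{h,\vec b}$. Because the ordering is topological, every such $K'=K_j$ satisfies $j<i$. Hence every off-diagonal block in row $i$ lies strictly to the left of the diagonal, and $\mathbf{A}$ is block lower triangular with diagonal blocks $A_{K_i}$.

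Two points need checking here, since they are the only real content of this step:
\begin{itemize}
\item Grazing faces and boundary faces contribute nothing off the diagonal. Grazing faces carry zero normal flux, and inflow boundary faces enter only through $G_K$.
\item Outflow faces $\partial K_+$ contribute only to $A_K$, because on those faces the upwind trace \eqref{eq:analysis_upwind_trace} is the interior trace $w_h|_K$.
\end{itemize}
So the upwind trace rule places each coupling either on the diagonal or below it.

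\textbf{Step 2: well-posedness.} The determinant of a block triangular matrix is the product of the determinants of its diagonal blocks. Each $A_{K_i}$ is nonsingular by hypothesis (for instance, via Lemma~\ref{lem:analysis_local_block_nonsingularity}), so $\mathbf{A}$ is nonsingular and the assembled DG system has a unique solution $\mathbf U^\ast$.

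\textbf{Step 3: induction along the ordering.} Let $U_{K_i}$ denote the values produced by the sweep. The claim is that $U_{K_i}=U^\ast_{K_i}$ for every $i$, proved by induction on $i$.
\begin{itemize}
\item For $i=1$, $K_1$ has no incoming edges, so $\mathcal U(K_1)=\emptyset$. Its equation reads $A_{K_1}U_{K_1}=R_{K_1}+G_{K_1}$, which has a unique solution, and $U^\ast_{K_1}$ satisfies the same equation.
\item For the inductive step, assume agreement for all $j<i$. Since every $K'\in\mathcal U(K_i)$ is some $K_j$ with $j<i$, the right-hand side formed by the sweep at $K_i$ coincides with the right-hand side in the $i$-th block row evaluated at $\mathbf U^\ast$. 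Nonsingularity of $A_{K_i}$ then forces $U_{K_i}=U^\ast_{K_i}$.
\end{itemize}
After $N_{\mathrm{cell}}$ steps, every block of the sweep output equals the corresponding block of the exact solution.

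The only deviation from this exact-arithmetic argument comes from rounding and from inexact local solves, which gives the stated caveat. This shows the sweep computes the exact solution of the assembled fixed-source DG system for any topological ordering.
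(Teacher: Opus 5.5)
Your proposal is correct and follows the paper's own argument. Under the topological ordering the cell-ordered global matrix is block lower triangular with nonsingular diagonal blocks $A_{K_i}$, so the sweep is exactly block forward substitution and returns the unique solution; your explicit induction and your checks on grazing, boundary and outflow faces fill in details the paper leaves implicit.
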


\begin{proof}
In a topological ordering, every upstream neighbour of $K_j$ lies among
$K_1,\dots,K_{j-1}$. The global matrix ordered by cells is therefore
block lower triangular with diagonal blocks $A_{K_j}$. Since these
blocks are nonsingular, block forward substitution is well-defined and
produces the unique solution. The update
\eqref{eq:analysis_cell_block_equation} is exactly this forward
substitution.
\end{proof}

\begin{corollary}[Batched sweep exactness]
\label{cor:analysis_batched_sweep_exactness}
Let $\mathcal I_\alpha^{(s)}$ be a graph-compatible sweep class whose
common dependency graph is acyclic, and let
\[
K_1,\dots,K_{N_{\mathrm{cell}}}
\]
be any topological ordering of that graph. Assume that
$A_{\alpha,K}$ is nonsingular for every
$\alpha\in\mathcal I_\alpha^{(s)}$ and every cell $K$.
Then Algorithm~\ref{alg:batched_wavefront_sweep}, restricted to
$\mathcal I_\alpha^{(s)}$, computes for every
$\alpha\in\mathcal I_\alpha^{(s)}$ the unique solution of the
corresponding independently assembled upwind DG system.

Consequently, applying the algorithm class by class gives the same
collection of discrete solutions as independent classical DG sweeps,
up to floating-point and local solve errors.
\end{corollary}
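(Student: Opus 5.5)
The argument reduces the corollary to Lemma~\ref{lem:analysis_topological_sweep_exactness} applied channel by channel. The only real work is to check that the wavefront schedule in Algorithm~\ref{alg:batched_wavefront_sweep} is a topological ordering for each channel, and that the batched updates decouple across channels.

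First, fix $\alpha\in\mathcal I_\alpha^{(s)}$. By graph compatibility, $\mathcal E_{h,\alpha}$ equals the common edge set of the class. Hence $\mathcal U_\alpha(K)=\mathcal U^{(s)}(K)$ for every cell $K$, and the neighbour sum formed in the algorithm is exactly the one in \eqref{eq:analysis_batched_local_update}, i.e.\ \eqref{eq:analysis_cell_block_equation} with the channel-$\alpha$ blocks $A_{\alpha,K}$, $B_{\alpha,K,K'}$, $R_{\alpha,K}$, $G_{\alpha,K}$.

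Second, I make precise how the wavefronts relate to the given ordering. I assume, as the algorithm's input requires, that the wavefronts $\mathcal W^{(s)}_1,\dots,\mathcal W^{(s)}_{N_W^{(s)}}$ partition the cells so that every edge $K'\to K$ of the common graph has $K'\in\mathcal W^{(s)}_i$ and $K\in\mathcal W^{(s)}_j$ with $i<j$; for instance, take topological levels. Then no two cells in the same wavefront are joined by an edge. Consequently, every ordering obtained by listing $\mathcal W^{(s)}_1$, then $\mathcal W^{(s)}_2$, and so on, with arbitrary order inside each wavefront, is a topological ordering of the common graph. When a wavefront is processed, the values $U_{\alpha,K'}$ it reads are therefore already final. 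Moreover, the simultaneous (batched) updates within one wavefront give the same result as any sequential order of them, since no update on a wavefront reads another entry of that wavefront. The channel index only labels disjoint slices of the tensor $U^{(s)}$, and no step of the algorithm mixes slices $\alpha\neq\beta$. So the restriction of the algorithm to one slice is literally the sequential sweep of Lemma~\ref{lem:analysis_topological_sweep_exactness} for channel $\alpha$ along a topological ordering. If the statement is read with wavefronts equal to singletons $\{K_j\}$, this step is trivial.

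Third, apply Lemma~\ref{lem:analysis_topological_sweep_exactness} with the channel-$\alpha$ data. Its hypotheses are acyclicity of $\mathcal G_{h,\alpha}$ (the common graph) and nonsingularity of the $A_{\alpha,K}$, both assumed. The lemma gives that the slice computes the unique solution of the assembled channel-$\alpha$ DG system. Uniqueness follows from block-triangular structure with nonsingular diagonal blocks, and so is independent of which topological ordering is used. The final statement follows by ranging over all classes $s\in\mathcal S$, which partition $\mathcal I_\alpha$, and comparing with independent classical sweeps, which by the same lemma also produce these unique solutions.

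The main obstacle is the second step: pinning down the wavefront hypothesis and the claim that intra-wavefront parallelism and channel batching do not alter the computation. Everything else is a direct citation of the lemma.
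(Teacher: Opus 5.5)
Your proposal is correct and follows the paper's proof. You fix a channel $\alpha$, use graph compatibility to identify its upwind neighbour sets and ordering with those of the class, note that the $\alpha$-slice never reads other slices, and invoke Lemma~\ref{lem:analysis_topological_sweep_exactness}. Your second step, which requires the wavefronts to be valid topological levels so that simultaneous updates within a wavefront give the same result as a sequential sweep, makes explicit a point that the paper's proof takes for granted.
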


\begin{proof}
Graph compatibility gives every
$\alpha\in\mathcal I_\alpha^{(s)}$ the same directed dependency graph
and hence the same topological cell ordering. Fix one batch entry
$\alpha$. The $\alpha$-slice of
Algorithm~\ref{alg:batched_wavefront_sweep} applies
\eqref{eq:analysis_batched_local_update} successively in that ordering,
and no update for this slice depends on any other batch entry.
It is therefore exactly the block forward substitution considered in
Lemma~\ref{lem:analysis_topological_sweep_exactness}.
Since $\alpha$ is arbitrary, the result holds simultaneously for every
channel in the sweep class, and hence class by class for the full
partition.
\end{proof}

For Cartesian meshes and constant directions, the topological ordering
has an explicit wavefront form. Let the mesh have $N_q$ cells in
coordinate direction $q$. Define
\begin{equation}
s_q=\operatorname{sign}(b_q),
\qquad
A(s)=\qc{q:s_q\neq0}.
\end{equation}
For $q\in A(s)$, set
\begin{equation}
\iota_q(i_q)
=
\begin{cases}
i_q, & b_q>0,\\
N_q+1-i_q, & b_q<0.
\end{cases}
\label{eq:analysis_directional_index}
\end{equation}
The Cartesian wavefronts are
\begin{equation}
\mathcal{W}^{\vec b}_k
=
\qc{
\vec i:
\sum_{q\in A(s)}
\iota_q(i_q)
=
k
},
\qquad
\norm{A(s)}\leq k\leq \sum_{q\in A(s)}N_q.
\label{eq:analysis_wavefronts}
\end{equation}
Coordinates with $s_q=0$ are inactive in this ordering because the
corresponding faces are grazing.

\begin{corollary}[Cartesian wavefront exactness]
\label{cor:analysis_cartesian_wavefront_exactness}
Assume that $\vec b$ is constant and nonzero on a Cartesian mesh. Then
the wavefronts $\qc{\mathcal{W}^{\vec b}_k}$ form a topological ordering
of $\mathcal{G}_{h,\vec b}$. Consequently, one wavefront sweep computes
the exact solution of the fixed-source upwind DG system, up to
floating-point and local solve errors.
\end{corollary}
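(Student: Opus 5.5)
The plan is to show that the level function
\[
\lambda(\vec i)=\sum_{q\in A(s)}\iota_q(i_q)
\]
strictly increases along every edge of $\mathcal{G}_{h,\vec b}$. Ordering cells by increasing $\lambda$, with ties inside one $\mathcal{W}^{\vec b}_k$ broken arbitrarily, is then a topological ordering. Lemma~\ref{lem:analysis_local_block_nonsingularity} gives nonsingular diagonal blocks, and Lemma~\ref{lem:analysis_topological_sweep_exactness} then yields exactness. We also need that the wavefronts partition all cells. This is immediate: each $\vec i$ has a unique value of $\lambda(\vec i)$, and since $\iota_q(i_q)\in\{1,\dots,N_q\}$, this value lies in the range $\norm{A(s)}\leq k\leq\sum_{q\in A(s)}N_q$. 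Nonemptiness of $A(s)$ follows from $\vec b\neq0$.

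The key step is to characterise the edges on a Cartesian mesh. Two distinct cells $K,K'$ sharing a face $F$ (a full $(d-1)$-face) differ by $\pm\vec e_q$ in exactly one index, and on $F$ the normal satisfies $\vec n_K=\pm\vec e_q$. Write $\vec i$ for the multi-index of $K$.

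\begin{itemize}
\item If $K'$ has index $\vec i-\vec e_q$, then $\vec n_K=-\vec e_q$ on $F$ and $\vec b\cdot\vec n_K=-b_q$. By \eqref{eq:analysis_graph_edge} there is an edge $K'\to K$ iff $b_q>0$.
\item If $K'$ has index $\vec i+\vec e_q$, there is an edge $K'\to K$ iff $b_q<0$.
\item If $b_q=0$, the face is grazing and contributes no edge.
\end{itemize}

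Hence every edge has the form $\vec i-s_q\vec e_q\to\vec i$ with $q\in A(s)$, which matches the shift description already given in the paper.

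Next we evaluate $\lambda$ along such an edge. Only coordinate $q$ changes.

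\begin{itemize}
\item If $s_q=+1$, then $\iota_q(i_q)-\iota_q(i_q-1)=1$.
\item If $s_q=-1$, the upstream index is $i_q+1$, and $\iota_q(i_q)-\iota_q(i_q+1)=(N_q+1-i_q)-(N_q-i_q)=1$.
\end{itemize}

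So $\lambda(K)=\lambda(K')+1$ on every edge $K'\to K$. Consequently no edge joins two cells of the same wavefront, and all edges point from $\mathcal{W}_k$ to $\mathcal{W}_{k+1}$. Cells within one wavefront are therefore mutually independent, which also justifies the parallel inner loop of Algorithm~\ref{alg:batched_wavefront_sweep}, and any concatenation of the wavefronts in order of $k$ is a topological ordering.

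The main point requiring care is the edge characterisation. On a conforming Cartesian tensor mesh, face neighbours are exactly the unit coordinate shifts; non-conforming or hanging-node meshes are excluded by the phrase ``Cartesian mesh.'' The other point is the sign convention in \eqref{eq:analysis_graph_edge} relative to $\vec n_K$. Grazing directions are handled by restricting $\lambda$ to $A(s)$. Acyclicity of $\mathcal{G}_{h,\vec b}$, i.e. sweepability, follows as a by-product, because $\lambda$ strictly increases along every directed path. Finally, we invoke Lemma~\ref{lem:analysis_topological_sweep_exactness}, using positivity of $c$ through Lemma~\ref{lem:analysis_local_block_nonsingularity}, to conclude that one wavefront sweep computes the exact solution of the fixed-source upwind DG system.
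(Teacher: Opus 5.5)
Your proposal is correct and follows the paper's proof: the level function $\sum_{q\in A(s)}\iota_q(i_q)$ increases by exactly one along every edge $\vec i-s_q\vec e_q\to\vec i$, so the wavefronts give a topological ordering, and Lemma~\ref{lem:analysis_topological_sweep_exactness} then gives exactness. Your explicit derivation of the edges from \eqref{eq:analysis_graph_edge}, the check that the wavefronts partition the cells, and the appeal to Lemma~\ref{lem:analysis_local_block_nonsingularity} (using the standing positivity of $c$) supply details that the paper leaves implicit.
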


\begin{proof}
If a cell $\vec i$ depends on its upwind neighbour
$\vec i-s_q\vec e_q$, then $q\in A(s)$ and
$\iota_q(i_q-s_q)=\iota_q(i_q)-1$. All other active directional
indices are unchanged. Thus every graph
edge points from a smaller value of
$\sum_{q\in A(s)}\iota_q(i_q)$ to a larger value. The wavefront ordering
is topological, and the result follows from
Lemma~\ref{lem:analysis_topological_sweep_exactness}.
\end{proof}

In coupled scattering or fission iterations,
Corollary~\ref{cor:analysis_batched_sweep_exactness} applies at each
fixed-source stage after the coupled source has been formed.

\subsection{Cost model and batched execution}
\label{subsec:analysis_cost_model}

The preceding results show that the batched wavefront algorithm
computes the same block triangular DG solution as a classical sweep.
The purpose of batching is to change how many independent cell-local
solves are presented to the hardware at once; it does not reduce the
arithmetic work of a single fixed-source solve. Here we record the
resulting work, depth and storage scalings.

Let
\begin{equation}
N_{\mathrm{cell}}
=
\prod_{q=1}^d N_q,
\qquad
n_p=(p+1)^d,
\qquad
n_\alpha
=
N_{\mathrm{samp}}
N_{\mathrm{angle}}
N_{\mathrm{group}}
N_{\mathrm{rhs}}.
\end{equation}
Here $n_\alpha$ is the number of fixed-source channels before
partitioning into sweep classes; if several sweep classes are present,
then $n_\alpha=\sum_{s\in\mathcal{S}} n_\alpha^{(s)}$.

For one fixed-source channel, each cell update involves a bounded number
of upwind-neighbour contributions and one dense local solve of size
$n_p$.  With reused local factors or inverses, the work per cell is
$O(n_p^2)$ and one deterministic sweep costs
$O(N_{\mathrm{cell}}n_p^2)$. If the local block is factorised during
the sweep, this becomes
$O(N_{\mathrm{cell}}n_p^3)$.  For $n_\alpha$ independent channels the
corresponding arithmetic work is multiplied by $n_\alpha$.
Tensorisation preserves this count; it presents the independent local
DG block operations as batched dense tensor operations rather than as
an outer loop over scalar sweeps. This use of batched small-system
kernels is consistent with the wider accelerator linear-algebra
literature \cite{HaidarEtAl2015BatchedGPU,
  AbdelfattahEtAl2016BatchedGEMM} and with recent batched transport
kernels \cite{SuauEtAl2025Batched}.

For a sweep class $\mathcal{I}_\alpha^{(s)}$, the measured loop time
is the time obtained by advancing its channels one at a time, while
the batched time is the time obtained by advancing them together with
one common wavefront schedule.  The speedup we report in \S
\ref{sec:numerics} is
\begin{equation}
  S_{\mathrm{batch}}
  =
  \frac{
    \sum_{s\in\mathcal{S}} T_{\mathrm{loop}}^{(s)}
  }{
    \sum_{s\in\mathcal{S}} T_{\mathrm{batch}}^{(s)}
  }.
  \label{eq:analysis_batch_speedup}
\end{equation}
This is an execution-layout speedup for the stated sweep kernel.  It
depends on the available batch dimension, memory bandwidth, kernel
fusion and the efficiency of the batched dense linear algebra backend.

On a Cartesian mesh, the sequential depth of one sweep class is the
number of active wavefronts, $O\qp{\sum_{q\in A(s)} N_q}$.
Within a wavefront, the independent work is indexed by
$(K,\alpha)\in\mathcal{W}_k^{(s)}\times\mathcal{I}_\alpha^{(s)}.$ Thus
parallelism comes both from the cells on the current wavefront and
from the fixed-source channels in the current sweep class.

The price of exposing this parallelism is storage.  For one sweep
class, the solution tensor requires
$O(n_\alpha^{(s)}N_{\mathrm{cell}}n_p)$ entries.  If channel- and
cell-dependent local inverse or factor data are also stored, the
corresponding storage is
$O(n_\alpha^{(s)}N_{\mathrm{cell}}n_p^2).$

In memory-limited calculations the sample dimension can be split into
microbatches of size $B$, reducing the peak storage to
$O(BN_{\mathrm{cell}}n_p)$ for the solution data,
$O(BN_{\mathrm{cell}}n_p^2)$ when channel-dependent local factors are
stored, without changing the total arithmetic work over all samples.

The same storage issue appears when differentiating through the
unrolled wavefront program.  Reverse-mode automatic differentiation
has the same forward arithmetic cost, but as we will see the reverse
pass must store or reconstruct the primal states needed by the
transposed local solves.  Without checkpointing this requires
$O(n_\alpha N_{\mathrm{cell}}n_p)$ forward solution entries, in
addition to any stored local factor data; checkpointing over
wavefronts or sample microbatches trades this storage for
recomputation.

\begin{remark}
The prototype precomputes the small cell-local inverses and applies
them as batched matrix--vector products. The wavefront formulation
requires only the action of the local solve, so factor-based batched
solves can be substituted without changing the graph or batching
structure and may be preferable for poorly conditioned local blocks.
\end{remark}

\section{Differentiating the discrete wavefront algorithm}
\label{sec:uq}

This section records the finite-dimensional sensitivity structure of
the batched sweep. We focus on the discrete algorithm in that on any
parameter region where the upwind face signs and wavefront schedule
are fixed, the sweep is a differentiable composition of local matrix
constructions, tensor gathers and dense local solves. Reverse-mode
automatic differentiation through this unrolled program gives the same
algebraic adjoint action as differentiating the corresponding block
triangular DG system. When a parameter crosses a face-sign change, the
map is only piecewise smooth, and automatic differentiation returns
the derivative of the active branch.

The finite-dimensional differentiation used here follows the standard
connection between reverse-mode algorithmic differentiation, implicit
linear solves and discrete adjoints \cite{BaydinEtAl2018ADSurvey,
  Giles2008MatrixDerivatives, FarrellEtAl2013DolfinAdjoint,
  BlondelEtAl2022ImplicitDiff}.

\subsection{Discrete solution map and sweep branches}

Let $\xi$ be the finite-dimensional parameter vector describing
coefficients, inflow data, sources, cross sections or streaming
directions. For fixed mesh, angular quadrature and polynomial degree,
the upwind DG discretisation gives a finite-dimensional linear system
\begin{equation}
  L_h(\xi)U_h(\xi)=F_h(\xi),
  \label{eq:uq_discrete_system}
\end{equation}
where $U_h(\xi)\in\mathbb{R}^{N_h}$ collects the DG degrees of freedom
of the fixed-source problem.

The dependence on a streaming direction enters through the upwind split.
For a mesh face $F$ adjacent to a cell $K$, set
\begin{equation}
  b_F(\xi)=\vec b(\xi)\cdot\vec n_K .
\end{equation}
On an interior face $F=K\cap K'$, the contribution to the row associated
with a test function on $K$ has the form
\begin{equation}
  \int_F b_F^+(\xi) u_K v_K ds
  -
  \int_F b_F^-(\xi) u_{K'} v_K ds,
  \label{eq:uq_upwind_split_face}
\end{equation}
where
\begin{equation}
  b_F^+(\xi)=\max\qc{b_F(\xi),0},
  \qquad
  b_F^-(\xi)=\max\qc{-b_F(\xi),0}.
  \label{eq:uq_positive_negative_parts}
\end{equation}
Boundary faces have the analogous form, with $b_F^-(\xi)$ multiplying
the prescribed inflow data. If $b_F(\xi)=0$, the face is grazing and
contributes no normal flux.

\begin{proposition}[Continuity and branchwise smoothness]
\label{prop:uq_continuity_branchwise}
Assume that the assembled entries of $L_h(\xi)$ and $F_h(\xi)$ are
continuous in $\xi$ on a parameter neighbourhood $\mathcal{N}$, and
that $L_h(\xi)$ is nonsingular for all $\xi\in\mathcal{N}$. Then
\begin{equation}
  \xi\mapsto U_h(\xi)=L_h(\xi)^{-1}F_h(\xi)
\end{equation}
is continuous on $\mathcal{N}$.

If the assembled entries are locally Lipschitz in $\xi$, then
$\xi\mapsto U_h(\xi)$ is locally Lipschitz on $\mathcal{N}$. If, on an
open subset of $\mathcal{N}$, all nonzero face signs are fixed, all
structural grazing faces remain grazing, and the coefficient, source
and inflow parameterisations are $C^1$, then $\xi\mapsto U_h(\xi)$ is
$C^1$ on that subset.
\end{proposition}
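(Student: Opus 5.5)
The plan is to reduce all three claims to elementary properties of matrix inversion on the open set $\mathrm{GL}_{N_h}(\mathbb{R})$. The map $\mathrm{inv}:L\mapsto L^{-1}$ is real-analytic there. By Cramer's rule, $L^{-1}=\operatorname{adj}(L)/\det L$, so each entry is a polynomial in the entries of $L$ divided by a nonvanishing polynomial. The solution map then factors as
\[
\xi\mapsto\bigl(L_h(\xi),F_h(\xi)\bigr)\mapsto L_h(\xi)^{-1}F_h(\xi).
\]
The second arrow is smooth on $\mathrm{GL}_{N_h}(\mathbb{R})\times\mathbb{R}^{N_h}$. \textbf{Continuity} is therefore immediate: it is the composition of a continuous map into the open set $\mathrm{GL}_{N_h}(\mathbb{R})\times\mathbb{R}^{N_h}$ (nonsingularity is assumed on $\mathcal{N}$) with a continuous map.

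For the \textbf{local Lipschitz} statement, fix $\xi_0\in\mathcal{N}$ and pick a compact neighbourhood $\overline{B}\subset\mathcal{N}$ on which the entries are Lipschitz. By continuity and compactness, $\sup_{\overline{B}}\Norm{L_h(\xi)^{-1}}$ and $\sup_{\overline{B}}\Norm{F_h(\xi)}$ are finite. Then use the resolvent identity
\[
U_h(\xi)-U_h(\eta)=L_h(\xi)^{-1}\bigl(F_h(\xi)-F_h(\eta)\bigr)-L_h(\xi)^{-1}\bigl(L_h(\xi)-L_h(\eta)\bigr)L_h(\eta)^{-1}F_h(\eta),
\]
which bounds $\Norm{U_h(\xi)-U_h(\eta)}$ by a constant times $\Norm{\xi-\eta}$. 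The face splitting \eqref{eq:uq_upwind_split_face} supplies this hypothesis naturally: $b_F^\pm=\max\qc{\pm b_F,0}$ are Lipschitz functions of $b_F$. This is why Lipschitz continuity survives a face-sign change even though differentiability does not.

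For \textbf{branchwise $C^1$}, work on an open set $\mathcal{O}\subset\mathcal{N}$ where all nonzero face signs are fixed and structurally grazing faces stay grazing. There each $b_F^+(\xi)$ and $b_F^-(\xi)$ is identically $0$ or equal to $\pm b_F(\xi)=\pm\vec b(\xi)\cdot\vec n_K$. Both are $C^1$ when $\vec b$ is $C^1$; the inactive one vanishes identically. The set of active upwind faces, and hence the sparsity pattern, the dependency graph and the inflow boundary $\Gamma_-$, is constant on $\mathcal{O}$. The remaining entries of $L_h$ and $F_h$ are integrals over fixed cells and faces of $C^1$ coefficient, source and inflow data against fixed basis functions. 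They are therefore $C^1$ by differentiation under the integral sign, which the assumed regularity into fixed $L^\infty$ and $L^2$ spaces justifies. Composing with the smooth inversion map gives $U_h\in C^1(\mathcal{O})$, with
\[
\partial_\xi U_h=L_h^{-1}\bigl(\partial_\xi F_h-(\partial_\xi L_h)U_h\bigr).
\]

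The main obstacle is the bookkeeping in this last step. I have to check that, on $\mathcal{O}$, every entry of $L_h$ and $F_h$ is a $C^1$ expression, in particular the boundary terms where the inflow set enters through $b_F^-$ multiplying $g$. The key point is that fixing the signs removes every occurrence of $\max\qc{\cdot,0}$ evaluated at a point where its argument changes sign. At structurally grazing faces the product $b_F^\pm\equiv0$ is trivially smooth. No argument beyond this case-check should be needed, and nonsingularity is supplied by hypothesis; in the DG setting it also follows from Lemma~\ref{lem:analysis_local_block_nonsingularity} and Lemma~\ref{lem:analysis_topological_sweep_exactness}.
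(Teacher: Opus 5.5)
Your proposal is correct and follows essentially the same route as the paper: continuity of inversion on the nonsingular matrices, the resolvent identity with a locally bounded inverse for the Lipschitz claim, and the reduction of $b_F^\pm$ to $\pm b_F$ or $0$ on a fixed face-sign branch for the $C^1$ claim. The only difference is cosmetic: you get the last step from smoothness of $L\mapsto L^{-1}$ (via Cramer's rule), where the paper cites the finite-dimensional implicit function theorem, and the two are equivalent here.
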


\begin{proof}
The maps $b_F\mapsto b_F^+$ and $b_F\mapsto b_F^-$ are continuous and
locally Lipschitz. Hence the assembled entries inherit the corresponding
regularity from the input parameterisations. Matrix inversion is
continuous on the set of nonsingular matrices, so
$L_h(\xi)^{-1}F_h(\xi)$ is continuous.

For local Lipschitz continuity, use the identity
$A^{-1}-B^{-1}=A^{-1}(B-A)B^{-1}$ together with local boundedness of
$L_h(\xi)^{-1}$. On a fixed
face-sign branch, the positive and negative parts in
\eqref{eq:uq_positive_negative_parts} reduce either to smooth linear
functions of $b_F$ or to the identically zero function on structural
grazing faces. Thus $L_h(\xi)$ and $F_h(\xi)$ are $C^1$ on the branch,
and differentiability follows from the finite-dimensional
implicit-function theorem.
\end{proof}
This is the transport-sweep analogue of the standard discrete-adjoint
interpretation of reverse differentiation
\cite{FarrellEtAl2013DolfinAdjoint,Giles2008MatrixDerivatives}.

A \emph{face-sign branch} is a connected open parameter region on which
the sign of
$\vec b(\xi)\cdot\vec n_F$
is fixed for every interior and boundary face, with structural grazing
faces remaining identically grazing. A face-sign branch therefore fixes
the directed dependency graph, the boundary inflow pattern and every
coefficient selected by the positive and negative parts in
\eqref{eq:uq_positive_negative_parts}.

A \emph{fixed-schedule region} is a connected parameter region on
which one chosen topological ordering remains valid. Fixed face signs
are a sufficient, but not necessary, condition for a fixed schedule.
Different dependency graphs may share a common ordering, and
boundary-face signs do not create interior graph edges. The smoothness
and exact discrete-adjoint statements below are therefore made on
face-sign branches. On a face-sign branch with sign pattern $\sigma$,
write
\begin{equation}
L_h^\sigma(\xi)U_h^\sigma(\xi)
=
F_h^\sigma(\xi).
\label{eq:uq_branch_discrete_system}
\end{equation}
If $L_h^\sigma$ and $F_h^\sigma$ are $C^1$ and $L_h^\sigma$ is
nonsingular, then differentiating \eqref{eq:uq_discrete_system} gives
\begin{equation}
  L_h(\xi)
  \frac{\partial U_h}{\partial \xi_r}
  =
  \frac{\partial F_h}{\partial \xi_r}
  -
  \frac{\partial L_h}{\partial \xi_r}U_h(\xi).
  \label{eq:uq_forward_sensitivity_system}
\end{equation}
Thus
\begin{equation}
  \frac{\partial U_h}{\partial \xi_r}
  =
  L_h(\xi)^{-1}
  \qb{
    \frac{\partial F_h}{\partial \xi_r}
    -
    \frac{\partial L_h}{\partial \xi_r}U_h(\xi)
  }.
  \label{eq:uq_forward_sensitivity}
\end{equation}

For a differentiable scalar discrete observable
$J_h(U_h(\xi),\xi)$, the pathwise derivative can be written in adjoint
form. Define $\lambda_h(\xi)$ by
\begin{equation}
  L_h(\xi)^T\lambda_h(\xi)
  =
  \qp{
    \frac{\partial J_h}{\partial U_h}
  }^T.
  \label{eq:uq_discrete_adjoint}
\end{equation}
Then
\begin{equation}
  \frac{\partial}{\partial \xi_r}
  J_h(U_h(\xi),\xi)
  =
  \frac{\partial J_h}{\partial \xi_r}
  +
  \lambda_h(\xi)^T
  \qb{
    \frac{\partial F_h}{\partial \xi_r}
    -
    \frac{\partial L_h}{\partial \xi_r}U_h(\xi)
  }.
  \label{eq:uq_reverse_sensitivity}
\end{equation}

\subsection{Automatic differentiation through one sweep}

On one sweep branch, the tensorised wavefront sweep is a composition of
finite-dimensional operations with a fixed graph and fixed wavefront
schedule. For a batch index $\alpha=(n,m,\gamma,r)$ and cell $K$, the
local update has the form
\begin{equation}
  U_{\alpha,K}
  =
  A_{\alpha,K}(\xi)^{-1}
  b_{\alpha,K}(\xi),
  \label{eq:uq_local_solve}
\end{equation}
where
\begin{equation}
  b_{\alpha,K}(\xi)
  =
  R_{\alpha,K}(\xi)
  +
  \sum_{K'\in\mathcal{U}_{\alpha}(K)}
  B_{\alpha,K,K'}(\xi)U_{\alpha,K'}(\xi)
  +
  G_{\alpha,K}(\xi).
  \label{eq:uq_local_rhs}
\end{equation}
For a perturbation $\delta\xi$ inside the branch,
\begin{equation}
  \delta U_{\alpha,K}
  =
  A_{\alpha,K}^{-1}
  \qb{
    \delta b_{\alpha,K}
    -
    (\delta A_{\alpha,K})U_{\alpha,K}
  }.
  \label{eq:uq_local_solve_derivative}
\end{equation}
Thus differentiating a local dense solve is exactly differentiating the
corresponding local DG block equation.

\begin{proposition}[Reverse-mode derivative on a fixed face-sign branch]
\label{prop:uq_reverse_mode_wavefront_adjoint}
Consider one fixed face-sign branch with fixed mesh topology, fixed
upwind graph and a fixed chosen topological cell ordering. Assume that
all local blocks, coupling blocks, source vectors and inflow vectors
are continuously differentiable functions of $\xi$ on this branch, and
that every local diagonal block is nonsingular there. Let
\begin{equation}
  L_h(\xi)U_h(\xi)=F_h(\xi)
  \label{eq:uq_global_triangular_system}
\end{equation}
denote the assembled upwind DG system ordered by the sweep, and let
$\mathcal{A}_h(\xi)$ denote the wavefront algorithm. Then
\begin{equation}
  \mathcal{A}_h(\xi)=U_h(\xi)=L_h(\xi)^{-1}F_h(\xi).
\end{equation}
Moreover, for any differentiable scalar discrete observable
$J_h(U_h(\xi),\xi)$, reverse-mode differentiation through the unrolled
wavefront algorithm gives
\begin{equation}
  D_\xi J_h(U_h(\xi),\xi)[\delta\xi]
  =
  \partial_\xi J_h(U_h(\xi),\xi)[\delta\xi]
  +
  \lambda_h(\xi)^T
  \qb{
    D_\xi F_h(\xi)[\delta\xi]
    -
    D_\xi L_h(\xi)[\delta\xi]U_h(\xi)
  },
  \label{eq:uq_wavefront_reverse_directional}
\end{equation}
where $\lambda_h(\xi)$ solves
\begin{equation}
  L_h(\xi)^T\lambda_h(\xi)
  =
  \qp{
    \frac{\partial J_h}{\partial U_h}(U_h(\xi),\xi)
  }^T.
  \label{eq:uq_wavefront_adjoint_system}
\end{equation}
For batched computations with several sweep classes, the identity holds
on each class and scalar observable derivatives are obtained by summing
the corresponding class contributions.
\end{proposition}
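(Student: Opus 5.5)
The proof has two parts: forward equivalence, then the reverse-mode identity.

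\emph{Forward equivalence.} On the fixed branch the graph and the chosen ordering do not change. Hence Lemma~\ref{lem:analysis_topological_sweep_exactness} (and Corollary~\ref{cor:analysis_batched_sweep_exactness} for the batched slices) shows that the unrolled program $\mathcal{A}_h(\xi)$ is block forward substitution for the block lower-triangular matrix $L_h(\xi)$. Its diagonal blocks $A_K(\xi)$ are nonsingular by hypothesis, so $\mathcal{A}_h(\xi)=L_h(\xi)^{-1}F_h(\xi)$ for every $\xi$ in the branch. On the branch the program is a fixed composition of $C^1$ maps: assembly of $A_K$, $B_{K,K'}$, $R_K$, $G_K$, gathers, matrix--vector products, and local solves $(A,b)\mapsto A^{-1}b$, which are smooth on nonsingular $A$. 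Therefore $\mathcal{A}_h$ is $C^1$, and by Proposition~\ref{prop:uq_continuity_branchwise} it coincides with the $C^1$ map $U_h$.

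\emph{What reverse mode computes.} Reverse mode is exact on any fixed composition of differentiable primitives. It returns the transpose of the chain-rule Jacobian of the program applied to the output cotangent $\partial J_h/\partial U_h$, plus the explicit term $\partial_\xi J_h$. Since the program and $U_h$ agree as functions on an open set, their derivatives agree, and reverse mode returns $D_\xi[J_h(U_h(\xi),\xi)]$. This reduces the claim to the discrete derivative identity. Differentiating $L_hU_h=F_h$ gives \eqref{eq:uq_forward_sensitivity}. Substituting into $\partial_U J_h\,\delta U$ and using $\lambda_h^T=\partial_U J_h\,L_h^{-1}$ from \eqref{eq:uq_wavefront_adjoint_system} yields \eqref{eq:uq_wavefront_reverse_directional}.

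\emph{Matching the reverse sweep to the adjoint solve.} To make the adjoint-solve interpretation concrete, we identify the reverse pass with back substitution for $L_h^T\lambda_h=(\partial J_h/\partial U_h)^T$. We propagate the cotangents $\bar U_K$ in reverse topological order. The local rule \eqref{eq:uq_local_solve_derivative} transposes to $\bar b_K=A_K^{-T}\bar U_K$ and $\bar A_K=-\bar b_K U_K^T$, and the gather transposes to $\bar U_{K'}\mathrel{+}=B_{K,K'}^T\bar b_K$. Induction down the reversed ordering then shows that $\bar b_K$ equals the $K$-block of $\lambda_h$: the block upper-triangular system $L_h^T$ is solved exactly by this recursion, since $L_h$ has diagonal blocks $A_K$ and off-diagonal blocks $-B_{K,K'}$. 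The parameter cotangents accumulate $\bar b_K^T(\delta R_K+\delta G_K+\sum\delta B_{K,K'}U_{K'})-\bar b_K^T\delta A_K U_K$, which is exactly $\lambda_h^T[\delta F_h-\delta L_h U_h]$ written blockwise.

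The main obstacle is bookkeeping in this last step, namely the sign conventions linking $B_{K,K'}$ to the off-diagonal blocks of $L_h$ and the order of accumulation. I will handle it by writing $L_h=\mathrm{diag}(A_K)-\mathcal{B}$ explicitly.

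\emph{Batched case.} The class tensor is a disjoint union of independent slices, with no cross-$\alpha$ dependence inside a sweep. A scalar observable summed over channels therefore has a derivative equal to the sum of the per-channel identities, and these sum class by class.
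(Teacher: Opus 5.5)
Your proposal is correct and takes the paper's route: forward equivalence via block forward substitution, the adjoint identity obtained by differentiating $L_hU_h=F_h$, and the reverse pass read as block backward substitution with $L_h^T$. You spell out what the paper only asserts, namely the cotangent recursion showing that $\bar b_K$ is the $K$-block of $\lambda_h$ under $L_h=\mathrm{diag}(A_K)-\mathcal B$. One small tightening: in the batched case you restrict to observables that are sums over channels, but applying your argument to the block-diagonal system over all classes already gives the class-wise decomposition for any scalar observable, with each class adjoint driven by $\partial J_h/\partial U^{(s)}$.
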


\begin{proof}
On a fixed branch, the wavefront algorithm is block forward
substitution for the ordered triangular system
\eqref{eq:uq_global_triangular_system}. Hence
$\mathcal{A}_h(\xi)=L_h(\xi)^{-1}F_h(\xi)$.

Differentiating \eqref{eq:uq_global_triangular_system} in direction
$\delta\xi$ gives
\begin{equation}
  L_h(\xi)\delta U_h
  =
  D_\xi F_h(\xi)[\delta\xi]
  -
  D_\xi L_h(\xi)[\delta\xi]U_h(\xi).
  \label{eq:uq_wavefront_forward_directional}
\end{equation}
Combining this with the observable derivative and the adjoint equation
\eqref{eq:uq_wavefront_adjoint_system} gives
\eqref{eq:uq_wavefront_reverse_directional}. Reverse-mode automatic
differentiation traverses the local operations in reverse topological
order. The transpose of each local solve is the transpose local block
solve associated with \eqref{eq:uq_local_solve_derivative}; the reverse
pass is therefore block backward substitution with $L_h(\xi)^T$.
\end{proof}

Let
\begin{equation}
  \mathcal{J}_h(\xi)
  =
  J_h(\mathcal{A}_h(\xi),\xi).
\end{equation}
Automatic differentiation through a fixed wavefront program computes
\begin{equation}
  D^{\mathrm{AD}}\mathcal{J}_h(\xi)
  =
  \nabla_\xi J_h(U_h(\xi),\xi),
  \label{eq:uq_ad_derivative}
\end{equation}
where the derivative is the derivative of the active sweep branch.

\subsection{Face-sign changes}

At a non-structural grazing configuration,
\[
  \vec b(\xi)\cdot\vec n_F=0,
\]
the positive- and negative-part functions entering the upwind flux are
locally Lipschitz but are generally not classically differentiable.
Consequently, although the discrete solution map
$\xi\mapsto U_h(\xi)$ remains locally Lipschitz under the assumptions of
Proposition~\ref{prop:uq_continuity_branchwise}, its derivative need not
be uniquely defined at such a parameter value.

On every open fixed-face-sign branch, however, the discrete solution
map is $C^1$ and the reverse-mode derivative agrees exactly with the
corresponding discrete adjoint.  As a grazing configuration is
approached from different neighbouring branches, the branchwise
derivatives need not agree.  Moreover, automatic differentiation
evaluated exactly at the grazing configuration depends on the
derivative convention assigned to the active nonsmooth primitive
\cite{LeeEtAl2020NonsmoothAD}.

We therefore make no branch-independent differentiability claim at
grazing configurations; the exact discrete-adjoint identity is asserted
only on open fixed-face-sign branches.

If the parameter law $\mu$ is absolutely continuous with respect to
Lebesgue measure, local Lipschitz continuity implies that the discrete
solution map is differentiable $\mu$-almost everywhere.

\subsection{Sampling and derivative-based summaries}

For a smooth discrete observable $J_h(U_h(\xi),\xi)$, pathwise
derivatives give derivative-based sensitivity measures of the type
commonly used in global sensitivity analysis
\cite{SobolKucherenko2009DGSM,Cacuci2003}. Define
\begin{equation}
  \nu_r(J_h)
  =
  \mathbb{E}
  \qb{
    \norm{
      \frac{\partial}{\partial \xi_r}
      J_h(U_h(\xi),\xi)
    }^2
  },
  \label{eq:uq_dgsm_raw}
\end{equation}
with Monte Carlo approximation
\begin{equation}
  \widehat{\nu}_{r,N}(J_h)
  =
  \frac{1}{N}
  \sum_{n=1}^{N}
  \norm{
    \frac{\partial}{\partial \xi_r}
    J_h(U_h(\xi^{(n)}),\xi^{(n)})
  }^2.
  \label{eq:uq_dgsm_estimator}
\end{equation}
The normalised derivative score is
\begin{equation}
  S_r^{\mathrm{DGSM}}(J_h)
  =
  \frac{
    \widehat{\nu}_{r,N}(J_h)
  }{
    \sum_{s=1}^{N_\xi}
    \widehat{\nu}_{s,N}(J_h)
  }.
  \label{eq:uq_dgsm_score}
\end{equation}
We use these scores to represent derivative-based rankings of the
local parameter influence on the chosen discrete observable. If the
input distribution assigns zero probability to face-sign-change sets,
the branch derivative exists almost surely under the local Lipschitz
assumptions above. Pathwise branch gradients are obtained by
differentiating the batched computational graph on the active sweep
branch and then reducing the samplewise derivatives in
\eqref{eq:uq_dgsm_estimator}--\eqref{eq:uq_dgsm_score}.

\section{Numerical experiments}
\label{sec:numerics}

The experiments test four aspects of the method. Example~1 verifies the
underlying upwind DG discretisation and the wavefront implementation on
a manufactured fixed-source problem. Example~2 isolates the effect of
batching independent fixed-source channels and measures the associated
memory-throughput tradeoff. Example~3 studies scalar material-shadowing
uncertainty and uses the same setting to test derivative-based
diagnostics and adjoint consistency. Example~4 embeds the batched sweep
inside a coupled multigroup power iteration and serves as a larger
algorithmic stress test.

Unless otherwise stated, the experiments are posed on
$D=(0,1)^2$ with tensor-product discontinuous finite elements on
uniform Cartesian meshes. Inflow data are imposed weakly through the
upwind numerical flux. Volume, face and diagnostic integrals are
evaluated using a high order Gauss--Legendre quadrature for the DG
terms and reported observables. In timing comparisons, the looped and
batched runs use the same discrete equations; only the execution
layout over independent fixed-source channels is changed.

The computations were run on a Dell Precision 5820 Tower with an Intel
Core i9-10900X CPU, $64$ GiB RAM and an NVIDIA RTX A4000 GPU. The GPU
timings used Python~3.14.4, PyTorch~2.11 and CUDA
runtime~12.8.  Timing data are reported as wall-clock
measurements. 

\ExampleSubsection{Verification of one fixed-source sweep}
{subsec:equivalence_convergence}

The first experiment verifies the wavefront implementation on a
manufactured fixed-source problem. The purpose is to check the
convergence of the underlying upwind DG discretisation, the weak
imposition of inflow data and the scaling of the sweep time with the
total number of batched degrees of freedom.

The domain is $D=(0,1)^2$. We use $N_{\mathrm{samp}}=64$ independent
realisations of
$\xi=(\xi_1,\xi_2,\xi_3,\xi_4)$, with independent components uniformly
distributed on $\qb{-1,1}$. For each realisation,
\begin{equation}
\vec b(\xi)
=
\begin{pmatrix}
\cos{\theta(\xi)}\\
\sin{\theta(\xi)}
\end{pmatrix},
\qquad
\theta(\xi)=\frac{\pi}{5}+\frac{3\pi}{25}\xi_1.
\label{eq:ex1_transport_field}
\end{equation}
Thus $\theta(\xi)\in\qb{2\pi/25,8\pi/25}$, so all samples lie in the same
sweep branch and the sweep proceeds from the south-west corner to the
north-east corner. The reaction coefficient is
\begin{equation}
c(x,y,\xi)
=
c_0(\xi)+c_x(\xi)x+c_y(\xi)y,
\label{eq:ex1_reaction}
\end{equation}
where
\begin{equation}
c_0(\xi)=1+\tfrac{3}{20}\xi_2,
\qquad
c_x(\xi)=\tfrac{7}{20}+\tfrac{2}{25}\xi_3,
\qquad
c_y(\xi)=\tfrac{1}{4}+\tfrac{2}{25}\xi_4.
\end{equation}
These choices give $c(x,y,\xi)\geq\tfrac{17}{20}$ for all
$(x,y)\in D$ and all sampled inputs.

The manufactured exact solution is
\begin{equation}
u_{\mathrm{ex}}(x,y,\xi)
=
1
+
a(\xi)
\sin{\qp{\pi x+\phi_x(\xi)}}
\cos{\qp{\tfrac{3\pi}{4}y+\phi_y(\xi)}}
+
\tau(\xi)x(1-y),
\label{eq:ex1_exact_solution}
\end{equation}
with
\begin{equation}
a(\xi)=\tfrac{1}{5}+\tfrac{1}{20}\xi_3,
\qquad
\phi_x(\xi)=\tfrac{1}{4}\xi_1,
\qquad
\phi_y(\xi)=-\tfrac{1}{5}\xi_2,
\qquad
\tau(\xi)=\tfrac{1}{10}+\tfrac{1}{25}\xi_4.
\end{equation}
The source and inflow data are defined by
\begin{equation}
f(x,y,\xi)
=
\vec b(\xi)\cdot \nabla u_{\mathrm{ex}}(x,y,\xi)
+
c(x,y,\xi)u_{\mathrm{ex}}(x,y,\xi),
\label{eq:ex1_source}
\end{equation}
and
\begin{equation}
g(x,y,\xi)=u_{\mathrm{ex}}(x,y,\xi)
\quad \text{on } \Gamma_-(\vec b(\xi)).
\label{eq:ex1_inflow}
\end{equation}

The computation uses $\mathbb{Q}_p$ elements with $p=0,1,2$ on uniform meshes
with $N_x=N_y=N$. The field errors are measured samplewise and then
averaged:
\begin{align}
\overline{e}_{L^2}
&=
\frac{1}{N_{\mathrm{samp}}}
\sum_{n=1}^{N_{\mathrm{samp}}}
\Norm{
u_h(\cdot,\xi^{(n)})
-
u_{\mathrm{ex}}(\cdot,\xi^{(n)})
}_{L^2(D)},
\label{eq:ex1_mean_l2_error}
\\
\overline{e}_{\mathrm{DG}}
&=
\frac{1}{N_{\mathrm{samp}}}
\sum_{n=1}^{N_{\mathrm{samp}}}
\Norm{
u_h(\cdot,\xi^{(n)})
-
u_{\mathrm{ex}}(\cdot,\xi^{(n)})
}_{\mathrm{DG},\xi^{(n)}}.
\label{eq:ex1_mean_dg_error}
\end{align}
The detector current on the right boundary is
\begin{equation}
J_{\mathrm{det}}(u;\xi)
=
\int_{7/20}^{13/20}
b_x(\xi)u(1,y,\xi)dy,
\label{eq:ex1_detector}
\end{equation}
and the mean absolute detector-current error is denoted
$\overline{e}_{\mathrm{det}}$. Observed convergence rates are computed
by
\begin{equation}
r_N(e)
=
\frac{\log(e_N/e_{2N})}{\log 2}.
\label{eq:observed_rate}
\end{equation}
The reported sweep time $t_{\mathrm{sweep}}$ is the median time for
the wavefront forward substitution over the full batch; it excludes
local block construction and inversion. The total number of batched DG
degrees of freedom is
\begin{equation}
  N_{\mathrm{dof}}
  =
  N_{\mathrm{samp}}N_xN_y(p+1)^2.
  \label{eq:ex1_total_dofs}
\end{equation}

\begin{table}[h!]
\centering
\scriptsize
\setlength{\tabcolsep}{3.5pt}
\begin{tabular}{cccccccccc}
\hline
$p$ & $N_x$ & $N_{dof}$ & $\overline{e}_{L^2}$ & $r_N(\overline{e}_{L^2})$ & $\overline{e}_{\mathrm{DG}}$ & $r_N(\overline{e}_{\mathrm{DG}})$ & $\overline{e}_{\mathrm{det}}$ & $r_N(\overline{e}_{\mathrm{det}})$ & sweep (s) \\
\hline
0 & 8 & $4,096$ & $2.207\times 10^{-2}$ & -- & $8.126\times 10^{-2}$ & -- & $2.355\times 10^{-3}$ & -- & 0.013 \\
0 & 16 & $16,384$ & $1.184\times 10^{-2}$ & 0.90 & $5.865\times 10^{-2}$ & 0.47 & $1.376\times 10^{-3}$ & 0.78 & 0.026 \\
0 & 32 & $65,536$ & $6.205\times 10^{-3}$ & 0.93 & $4.194\times 10^{-2}$ & 0.48 & $7.487\times 10^{-4}$ & 0.88 & 0.053 \\
0 & 64 & $262,144$ & $3.195\times 10^{-3}$ & 0.96 & $2.983\times 10^{-2}$ & 0.49 & $3.902\times 10^{-4}$ & 0.94 & 0.108 \\
0 & 128 & $1,048,576$ & $1.626\times 10^{-3}$ & 0.97 & $2.116\times 10^{-2}$ & 0.50 & $1.992\times 10^{-4}$ & 0.97 & 0.216 \\
0 & 256 & $4,194,304$ & $8.213\times 10^{-4}$ & 0.99 & $1.499\times 10^{-2}$ & 0.50 & $1.006\times 10^{-4}$ & 0.99 & 0.437 \\
\hline
1 & 8 & $16,384$ & $9.142\times 10^{-4}$ & -- & $4.342\times 10^{-3}$ & -- & $1.103\times 10^{-5}$ & -- & 0.013 \\
1 & 16 & $65,536$ & $2.329\times 10^{-4}$ & 1.97 & $1.549\times 10^{-3}$ & 1.49 & $1.546\times 10^{-6}$ & 2.83 & 0.028 \\
1 & 32 & $262,144$ & $5.876\times 10^{-5}$ & 1.99 & $5.498\times 10^{-4}$ & 1.49 & $2.010\times 10^{-7}$ & 2.94 & 0.058 \\
1 & 64 & $1,048,576$ & $1.476\times 10^{-5}$ & 1.99 & $1.947\times 10^{-4}$ & 1.50 & $2.593\times 10^{-8}$ & 2.95 & 0.116 \\
1 & 128 & $4,194,304$ & $3.699\times 10^{-6}$ & 2.00 & $6.890\times 10^{-5}$ & 1.50 & $2.839\times 10^{-9}$ & 3.19 & 0.236 \\
1 & 256 & $16,777,216$ & $9.257\times 10^{-7}$ & 2.00 & $2.437\times 10^{-5}$ & 1.50 & $3.819\times 10^{-10}$ & 2.89 & 0.500 \\
\hline
2 & 8 & $36,864$ & $2.899\times 10^{-5}$ & -- & $1.694\times 10^{-4}$ & -- & $7.386\times 10^{-8}$ & -- & 0.015 \\
2 & 16 & $147,456$ & $3.650\times 10^{-6}$ & 2.99 & $3.008\times 10^{-5}$ & 2.49 & $1.703\times 10^{-9}$ & 5.44 & 0.030 \\
2 & 32 & $589,824$ & $4.577\times 10^{-7}$ & 3.00 & $5.328\times 10^{-6}$ & 2.50 & $3.680\times 10^{-10}$ & 2.21 & 0.063 \\
2 & 64 & $2,359,296$ & $5.731\times 10^{-8}$ & 3.00 & $9.426\times 10^{-7}$ & 2.50 & $1.318\times 10^{-11}$ & 4.80 & 0.125 \\
2 & 128 & $9,437,184$ & $7.170\times 10^{-9}$ & 3.00 & $1.667\times 10^{-7}$ & 2.50 & $1.343\times 10^{-12}$ & 3.29 & 0.277 \\
2 & 256 & $37,748,736$ & $8.966\times 10^{-10}$ & 3.00 & $2.947\times 10^{-8}$ & 2.50 & $3.369\times 10^{-14}$ & 5.32 & 0.690 \\
\hline
\end{tabular}
\caption{\CurrentExample: Manufactured random-coefficient fixed-source sweep verification. Mean $L^2(D)$ errors, DG-norm errors, detector-current errors, observed rates and median wavefront sweep timings for $Q_0$, $Q_1$ and $Q_2$ upwind DG. The reported degrees of freedom include all samples.}
\label{tab:ex1_convergence}
\end{table}

\begin{figure}[h!]
\centering

\begin{subfigure}[t]{0.48\textwidth}
\centering
\includegraphics[width=\textwidth]{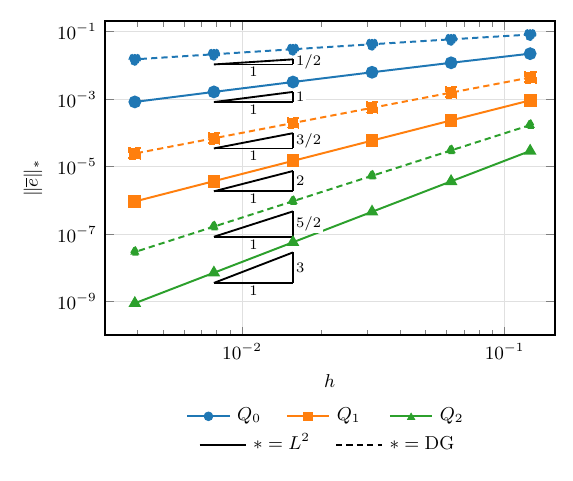}
\caption{Mean field errors.}
\label{fig:ex1_l2_dg_convergence}
\end{subfigure}
\hfill
\begin{subfigure}[t]{0.48\textwidth}
\centering
\includegraphics[width=\textwidth]{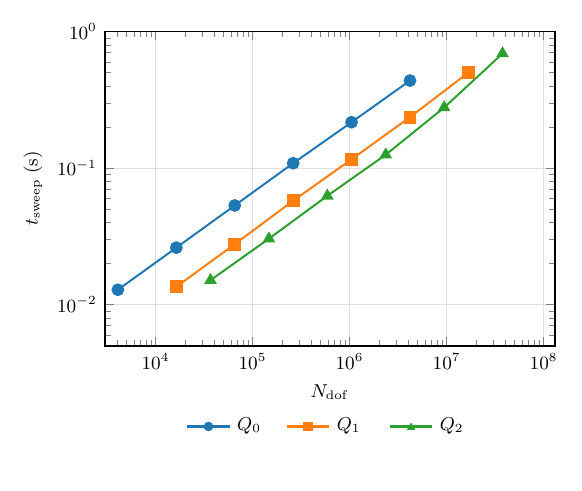}
\caption{Median sweep time.}
\label{fig:ex1_runtime_dofs}
\end{subfigure}

\caption{\CurrentExample: manufactured random-coefficient fixed-source
  sweep verification. Panel~\textup{(a)} shows the mean samplewise
  $L^2(D)$ and DG-norm errors under mesh refinement for
  $\mathbb{Q}_0$, $\mathbb{Q}_1$ and $\mathbb{Q}_2$ upwind
  DG. Panel~\textup{(b)} shows the median wavefront sweep time
  $t_{\mathrm{sweep}}$ as a function of the total number of batched DG
  degrees of freedom $N_{\mathrm{dof}}$.}
\label{fig:ex1_verification}
\end{figure}

Table~\ref{tab:ex1_convergence} and Figure~\ref{fig:ex1_verification}
verify the expected discretisation behaviour. The mean $L^2(D)$ and
DG-norm errors converge at approximately orders $p+1$ and
$p+\tfrac12$, respectively, consistent with the classical upwind DG
behaviour on this Cartesian setting
\cite{JohnsonPitkaranta1986,CockburnDongGuzman2008}. The
detector-current error supplies an additional scalar-output check and
converges more rapidly in this manufactured example; the finest
$\mathbb Q_2$ values are at floating-point scale. Over the tested
range, the measured sweep time grows sublinearly with
$N_{\mathrm{dof}}$, with an empirical trend close to
$t_{\mathrm{sweep}}\propto N_{\mathrm{dof}}^{1/2}$, reflecting the
increasing concurrency available within the wavefronts. The arithmetic
work scaling is given in Section~\ref{subsec:analysis_cost_model}.

\ExampleSubsection{Performance and microbatch scaling}
{subsec:batching_scaling}

The second experiment isolates the performance effect of carrying
independent fixed-source solves as a leading tensor dimension. In every
looped and batched comparison, the spatial discretisation,
sample-dependent coefficients, cell-local equations and wavefront
ordering are identical; only the execution layout over the sample
index is changed.

The comparison is therefore closest in spirit to embedded-ensemble and
batched transport approaches, in which independent algebraic problems
are presented simultaneously to the hardware
\cite{PhippsEtAl2017EmbeddedEnsemble, DEliaEtAl2018EnsembleGrouping,
  SuauEtAl2025Batched}.

The baseline problem uses $\mathbb{Q}_1$ upwind DG on a uniform
$64\times64$ mesh of $D=(0,1)^2$, giving $16{,}384$ DG unknowns per
sample. We solve
\begin{equation}
\vec b(\xi)\cdot\nabla u(x,\xi)
+
c(x,\xi)u(x,\xi)
=
0,
\qquad x\in D,
\label{eq:ex2_transport_problem}
\end{equation}
with
\begin{equation}
\vec b(\xi)
=
\begin{pmatrix}
\cos{\theta(\xi)}\\
\sin{\theta(\xi)}
\end{pmatrix},
\qquad
\theta(\xi)\sim
\mathcal U\qp{\tfrac{3}{100},\tfrac{3}{10}}.
\label{eq:ex2_direction}
\end{equation}
All sampled directions have positive coordinate components and
therefore share the same south-west to north-east sweep schedule.

The left-boundary inflow is
\begin{equation}
g(0,y,\xi)
=
A(\xi)
\exp\qp{
-\frac{(y-y_0(\xi))^2}{2\sigma_b(\xi)^2}
},
\qquad
g(x,0,\xi)=0,
\label{eq:ex2_inflow}
\end{equation}
where
\begin{equation}
y_0(\xi)\sim
\mathcal U\qp{\tfrac{7}{20},\tfrac{13}{20}},
\qquad
\sigma_b(\xi)\sim
\mathcal U\qp{\tfrac{7}{200},\tfrac{9}{100}},
\qquad
A(\xi)=\exp\qp{\tfrac{Z}{5}},
\quad
Z\sim\mathcal N(0,1).
\label{eq:ex2_inflow_parameters}
\end{equation}
The reaction coefficient is
\begin{equation}
c(x,y,\xi)
=
c_{\mathrm{bg}}(\xi)
+
a_c(\xi)
\exp\qp{
-\frac{
625\qp{(x-\tfrac{31}{50})^2+(y-\tfrac12)^2}
}{18}
},
\label{eq:ex2_absorption}
\end{equation}
with
\begin{equation}
c_{\mathrm{bg}}(\xi)
\sim
\mathcal U\qp{\tfrac35,\tfrac75},
\qquad
a_c(\xi)\sim\mathcal U(0,3).
\label{eq:ex2_absorption_parameters}
\end{equation}
All inputs are independent. Hence the samples have distinct directions,
local inverse blocks and weak inflow vectors, while retaining one
common dependency graph and wavefront sequence.

For sweep-only timings, the channel-dependent local inverse blocks and
inflow vectors are prepared before the timed region, and the solution
and wavefront-workspace tensors are preallocated. A timed sweep includes
initialisation of these arrays, gathering the west and south upwind
states, applying the face couplings and stored local inverses, inserting
the weak inflow data and scattering the updated cell coefficients.

CUDA synchronisation is performed immediately before and after each
complete timed region, with no synchronisation inside the sample or
microbatch loops. Launch and sweep-only measurements use two untimed
warm-up runs and ten timed repetitions; complete-pipeline and
mesh-scaling measurements use one warm-up and five repetitions.
Medians are reported, with interquartile ranges where shown.

We compare direct CUDA launching with CUDA-graph replay. In the latter,
the full wavefront sequence is captured once and subsequently replayed.
For the scalar baseline, a static single-sample graph is replayed once
per sample, with the corresponding inverse blocks, inflow data and
direction coefficients copied into fixed buffers without intermediate
synchronisation. The batched layout instead replays one graph per
batch. Graph replay therefore reduces ordinary host-side dispatch in
both layouts, but it does not equalise the number of graph replays or
fixed-buffer updates. The resulting comparison measures the complete
execution-layout benefit, including replay count, buffer traffic and
sample-parallel device work.

\subsubsection{Launch amortisation.}

The launch study uses $N_{\mathrm{samp}} \in
\qc{1,2,4,8,16,32,64,128,256}.$ For each sample count, the same
prepared problems are processed either sequentially or as one tensor
batch.

\begin{figure}[h!]
\centering

\begin{subfigure}[t]{0.48\textwidth}
\centering
\includegraphics[width=\textwidth]
{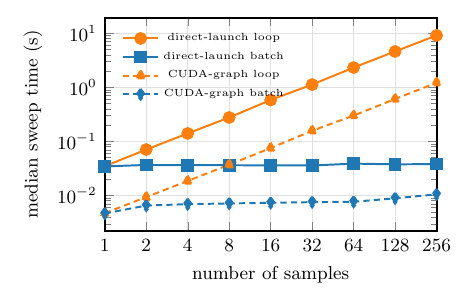}
\caption{Median scalar-loop and batched sweep times.}
\label{fig:ex2_launch_times}
\end{subfigure}
\hfill
\begin{subfigure}[t]{0.48\textwidth}
\centering
\includegraphics[width=\textwidth]
{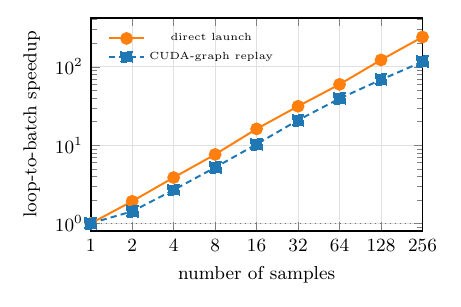}
\caption{Corresponding loop-to-batch speedups.}
\label{fig:ex2_launch_speedup}
\end{subfigure}

\caption{\CurrentExample: launch-amortisation study for identical
collections of fixed-source sample problems. Left: measured
scalar-loop and batched sweep times under direct CUDA launching and
CUDA-graph replay. Right: the corresponding loop-to-batch speedups.
Graph replay reduces ordinary dispatch overhead in both layouts.
Because the scalar layout still replays a graph and updates fixed
buffers once per sample, the remaining separation is an end-to-end
execution-layout speedup.}
\label{fig:ex2_launch_amortisation}
\end{figure}

\subsubsection{Memory--throughput tradeoff.}

The second study fixes the workload at $N_{\mathrm{samp}}=4096$ and
varies the sample microbatch size $B \in
\qc{1,2,4,8,16,32,64,128,256,512,1024,2048,4096}.$
The full $\mathbb{Q}_1$ workload contains $16{,}777{,}216$
sample--cell updates, or $67{,}108{,}864$
sample--cell--DG-unknown updates.

Two timings are reported. The \emph{prepared-sweep} measurement
constructs one representative batch of width $B$ and repeatedly applies
the sweep until the equivalent of $4096$ sample solves has been
completed. The \emph{complete-pipeline} measurement processes all
$4096$ distinct samples and, for every microbatch, constructs the
reaction fields and local blocks on the device, computes their inverses,
forms the weak inflow vectors, allocates the work arrays and performs
the sweep. 

\begin{figure}[h!]
\centering

\begin{subfigure}[t]{0.48\textwidth}
\centering
\includegraphics[width=\textwidth]
{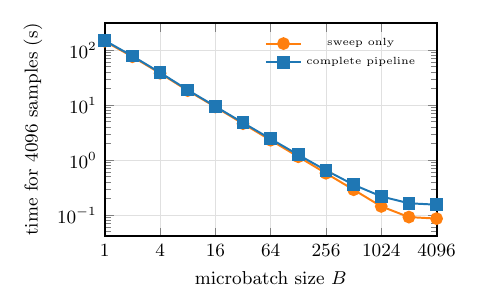}
\caption{Time for a fixed $4096$-sample-equivalent workload.}
\label{fig:ex2_microbatch_timing}
\end{subfigure}
\hfill
\begin{subfigure}[t]{0.48\textwidth}
\centering
\includegraphics[width=\textwidth]
{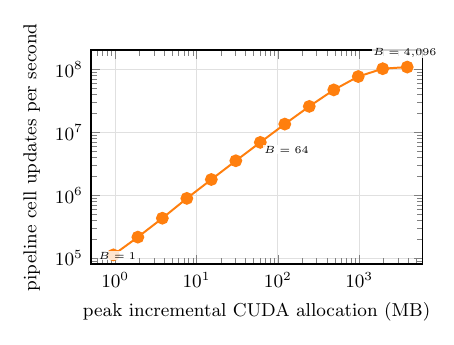}
\caption{Memory--throughput tradeoff.}
\label{fig:ex2_memory_throughput}
\end{subfigure}

\caption{\CurrentExample: sample-microbatch study. Left: median time
for a fixed $4096$-sample-equivalent prepared-sweep workload and for
the complete pipeline applied to all $4096$ distinct samples. Right:
complete-pipeline throughput against peak incremental CUDA allocation.
Increasing $B$ exposes more sample parallelism but increases the
resident channel-dependent storage.}
\label{fig:ex2_microbatch_tradeoff}
\end{figure}

Figure~\ref{fig:ex2_microbatch_tradeoff} shows the expected
memory--throughput tradeoff. Increasing $B$ reduces the number of
wavefront traversals and increases the independent work within each
stage, but the improvement diminishes near device saturation. The
reported memory is the incremental CUDA allocation relative to the
state immediately preceding preparation or execution. Because the
dominant inverse-block storage scales as
$\Oh(BN_xN_y(p+1)^4)$, an intermediate microbatch can recover most of
the available throughput with substantially less memory than the full
batch.

\subsubsection{Mesh, degree and batch scaling.}

The third study varies the spatial resolution, polynomial degree and
batch size according to $N_x=N_y=N \in \qc{32,64,128,256},
p\in\qc{0,1,2}, B\in\qc{1,64,512,4096}.$ One sweep contains $BN^2$
sample--cell updates and $BN^2(p+1)^2$ sample--cell--DG-unknown
updates. Its wavefront depth is $2N-1$, so the average number of
sample--cell updates exposed per causal stage is $\tfrac{BN^2}{2N-1}.$

For each configuration, we record direct and CUDA-graph sweep times,
cell-update and DG-unknown-update throughputs, wavefront depth,
prepared-batch storage and peak incremental CUDA allocation in Figure
\ref{fig:ex2_mesh_degree_scaling}.

\begin{figure}[h!]
\centering
\includegraphics[width=0.58\textwidth]
{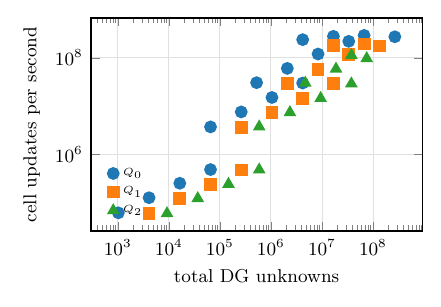}
\caption{\CurrentExample: mesh, degree and batch scaling for the
  CUDA-graph sweep. Throughput initially increases as more independent
  work is exposed within each wavefront and then approaches a
  degree-dependent saturated regime. Scatter between configurations
  with the same total number of DG unknowns reflects the separate
  effects of wavefront depth and batch width.}
\label{fig:ex2_mesh_degree_scaling}
\end{figure}

Figure~\ref{fig:ex2_mesh_degree_scaling} confirms that tensor size alone
does not determine throughput. For example, the $\mathbb Q_0$
configurations $(N,B)=(32,4096)$ and $(256,64)$ both contain
$4{,}194{,}304$ unknowns, but expose approximately
$6.66\times10^4$ and $8.21\times10^3$ cell updates per wavefront stage,
respectively; their throughputs are $3.060\times10^8$ and
$1.122\times10^8$ cell updates per second. Performance therefore
depends separately on wavefront depth and within-stage concurrency.
Graph replay is most beneficial in the launch-limited regime: the
direct-to-graph speedup is $4.61$--$9.03$ at $B=1$,
$2.82$--$6.53$ at $B=64$, and $1.01$--$1.29$ at $B=4096$.

\begin{table}[h!]
\centering
\begin{tabular}{ccccc}
\hline
degree
&
peak throughput
&
largest successful $(N,B)$
&
DG unknowns
&
peak allocation (MB)
\\
\hline
$\mathbb{Q}_0$
&
$3.060\times10^8$
&
$(256,4096)$
&
$2.684\times10^8$
&
$1.074\times10^4$
\\
$\mathbb{Q}_1$
&
$2.058\times10^8$
&
$(256,512)$
&
$1.342\times10^8$
&
$7.785\times10^3$
\\
$\mathbb{Q}_2$
&
$1.202\times10^8$
&
$(128,512)$
&
$7.550\times10^7$
&
$8.657\times10^3$
\\
\hline
\end{tabular}
\caption{Summary of the mesh--degree scaling study. The peak
  throughput is the largest measured CUDA-graph cell-update rate for
  each degree. The final three columns give the configuration with the
  largest total number of batched DG unknowns; memory is the peak
  incremental allocation during prepared-batch construction.}
\label{tab:ex2_mesh_degree_limits}
\end{table}

Table~\ref{tab:ex2_mesh_degree_limits} shows that the attainable batch
size decreases with polynomial degree because the stored local inverse
blocks scale quadratically with the cell-local dimension
$(p+1)^2$. The useful batch range is therefore determined jointly by
wavefront occupancy, polynomial degree and available device memory;
there is no hardware-independent optimal batch size.

As an implementation check, the looped, batched and CUDA-graph outputs
were compared for every launch-study sample count. Independent scalar
references were also constructed for the first $16$ samples in the
microbatch study. All comparisons agreed to the level expected from
single-precision roundoff, confirming that the measured performance
differences arise from execution layout rather than changes to the
discrete sweep.

\FloatBarrier

\ExampleSubsection{Material-shadowing UQ and adjoint verification}
{subsec:material_shadowing}

This experiment applies the batched sweep to a fixed beam passing
through an absorber with uncertain position, width and strength. A
representative realisation first illustrates the physical
mechanism. The absorber intersects and attenuates the beam, producing
a downstream shadow. Ensemble statistics then quantify the spatial
uncertainty in this shadow, while scalar responses connect the
four-dimensional input uncertainty to an effective beamline optical
depth.

\subsubsection{Problem, ensemble and representative realisation.}

We solve
\begin{align}
\vec b\cdot\nabla u(x,\zeta)+c(x,\zeta)u(x,\zeta)
&=0,
&&x\in D,
\label{eq:shadow_transport}
\\
\vec b
&=
\begin{pmatrix}
\cos\theta\\
\sin\theta
\end{pmatrix},
\qquad
\theta=\tfrac{7}{50}.
\label{eq:shadow_direction}
\end{align}
Both components of $\vec b$ are positive, so every sample uses the
south-west to north-east sweep. The inflow is
\begin{equation}
g(0,y)
=
\exp\qp{
-\frac{(y-y_b)^2}{2\sigma_b^2}
},
\qquad
y_b=\tfrac{23}{50},
\qquad
\sigma_b=\tfrac{3}{40},
\qquad
g(x,0)=0.
\label{eq:shadow_inflow}
\end{equation}

The independent standardised variables
\begin{equation}
\zeta=(\zeta_1,\zeta_2,\zeta_3,\zeta_4)
\sim\mathcal U\qp{\qb{-1,1}^4}
\label{eq:shadow_standardised_variables}
\end{equation}
determine
\begin{align}
x_c(\zeta)
&=\tfrac{13}{25}+\tfrac15\zeta_1,
&
y_c(\zeta)
&=\tfrac12+\tfrac{11}{50}\zeta_2,
\label{eq:shadow_standardised_centres}
\\
\sigma_c(\zeta)
&=\tfrac{7}{80}+\tfrac{17}{400}\zeta_3,
&
a_{\mathrm{inc}}(\zeta)
&=\tfrac{15}{2}+\tfrac92\zeta_4.
\label{eq:shadow_standardised_width_strength}
\end{align}
The absorption coefficient is
\begin{equation}
c(x,y,\zeta)
=
c_{\mathrm{bg}}
+
a_{\mathrm{inc}}(\zeta)
\exp\qp{
-\frac{(x-x_c(\zeta))^2+(y-y_c(\zeta))^2}
{2\sigma_c(\zeta)^2}
},
\qquad
c_{\mathrm{bg}}=\tfrac14.
\label{eq:shadow_absorption}
\end{equation}

The main ensemble uses $\mathbb Q_1$ DG on a $128\times128$ mesh with
$N_{\mathrm{samp}}=4096$. It therefore performs
\[
4096\times128^2
=
67{,}108{,}864
\]
sample--cell solves and
\[
4\times67{,}108{,}864
=
268{,}435{,}456
\]
sample--cell--unknown updates over $255$ wavefronts. The forward
calculation uses single precision, precomputed local inverse blocks
and in-place GPU updates.

Sensitivities are computed for $N_{\mathrm{DGSM}}=2048$ of the same
samples in differentiable microbatches of width $128$. These
microbatches use batched local solves and avoid in-place modifications
of tensors required by automatic differentiation.

\begin{figure}[h!]
\centering
\includegraphics[width=\textwidth]
{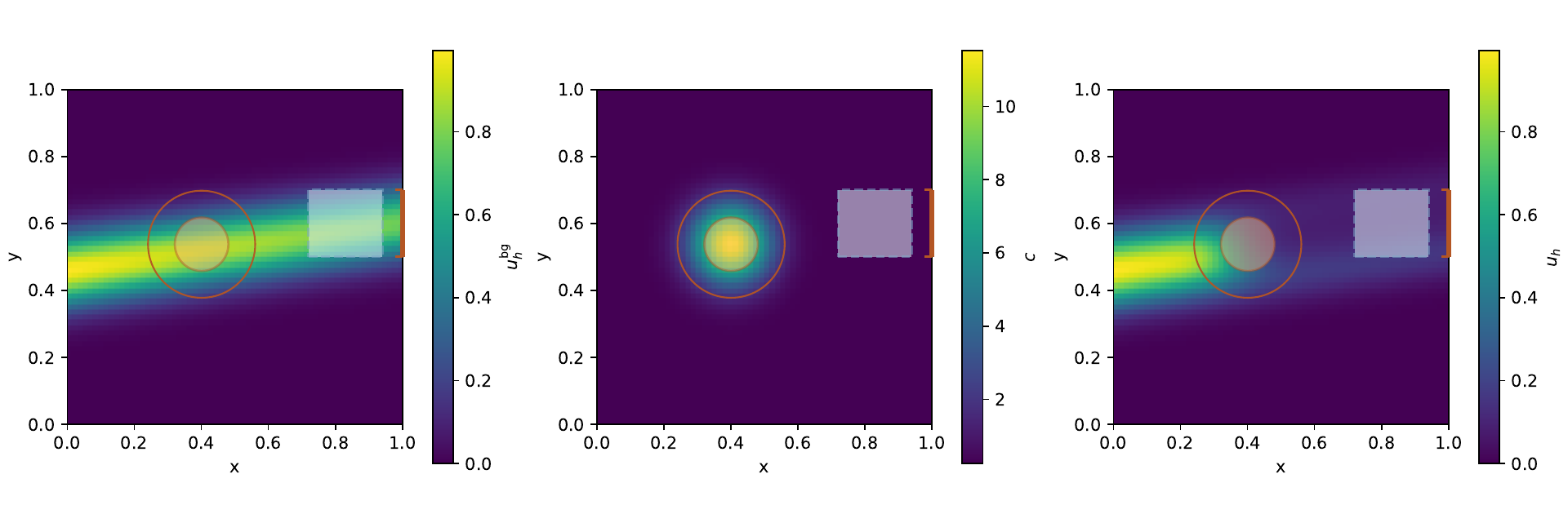}
\caption{\CurrentExample: representative material-shadowing
realisation. Left: the background solution
$u_h^{\mathrm{bg}}$ with $c=c_{\mathrm{bg}}$. Centre: the absorption
coefficient $c(x,\zeta)$. Right: the corresponding transported
solution $u_h(x,\zeta)$. The circle marks the uncertain inclusion, the
rectangle marks the target region $D_T$, and the boundary marker shows
the detector interval.}
\label{fig:shadowing_representative}
\end{figure}

\subsubsection{Observables and ensemble diagnostics.}

Let $u_h^{\mathrm{bg}}$ be the solution with
$c=c_{\mathrm{bg}}$. The relative attenuation and shadow probability
are
\begin{align}
A_h(x,\zeta)
&=
\operatorname{clip}_{\qb{0,1}}
\qp{
\frac{u_h^{\mathrm{bg}}(x)-u_h(x,\zeta)}
{\max\qc{u_h^{\mathrm{bg}}(x),\varepsilon_{\mathrm{bg}}}}
},
\qquad
\varepsilon_{\mathrm{bg}}=10^{-8},
\label{eq:shadow_relative_attenuation}
\\
P_{\mathrm{sh}}(x)
&=
\mathbb P\qp{A_h(x,\zeta)>\tfrac{1}{10}}.
\label{eq:shadow_probability}
\end{align}

The detector and target geometry are
\begin{align}
y_{\mathrm{det}}
&=y_b+\tan\theta,
&
\Delta y_{\mathrm{det}}
&=0.20,
\label{eq:shadow_detector_geometry}
\\
D_T
&=
\qb{0.72,0.94}
\times
\qb{y_{\mathrm{det}}-0.10,y_{\mathrm{det}}+0.10}.
\label{eq:shadow_target_region}
\end{align}
The principal responses are
\begin{align}
J_{\mathrm{det}}(u;\zeta)
&=
\int_{y_{\mathrm{det}}-\Delta y_{\mathrm{det}}/2}
     ^{y_{\mathrm{det}}+\Delta y_{\mathrm{det}}/2}
b_xu(1,y,\zeta)\,\mathrm dy,
\label{eq:shadow_detector}
\\
J_T(u;\zeta)
&=
\frac{1}{\norm{D_T}}
\int_{D_T}u(x,\zeta)\,\mathrm dx,
\label{eq:shadow_target_mean}
\\
J_{\mathrm{inc}}(u;\zeta)
&=
\int_D
\qp{c(x,\zeta)-c_{\mathrm{bg}}}
u(x,\zeta)\,\mathrm dx.
\label{eq:shadow_inclusion_absorption}
\end{align}

To relate the four uncertain inclusion parameters to a scalar measure
of beam overlap, define the inclusion optical depth along the
unperturbed beam centreline by
\begin{equation}
\tau_{\mathrm{beam}}(\zeta)
=
\frac{1}{b_x}
\int_0^1
\qb{
c\qp{x,y_b+x\tan\theta,\zeta}-c_{\mathrm{bg}}
}
\,\mathrm dx.
\label{eq:shadow_beamline_optical_depth}
\end{equation}
The quantity $\tau_{\mathrm{beam}}$ incorporates the position, width
and strength of the inclusion through its overlap with the beam.

Figure~\ref{fig:shadowing_ensemble_story} connects the representative
realisation to the full ensemble. The shadow probability occupies a
broad downstream region, while the largest standard deviation occurs
near the transition between weakly and strongly attenuated states.
The detector current decreases monotonically with
$\tau_{\mathrm{beam}}$; their sample correlation is $-0.949$. Thus,
although four parameters define the inclusion, their effect on this
detector is largely organised by a single physically meaningful
overlap diagnostic.

\begin{figure}[h!]
\centering

\begin{subfigure}[t]{0.315\textwidth}
\centering
\includegraphics[width=\textwidth]
{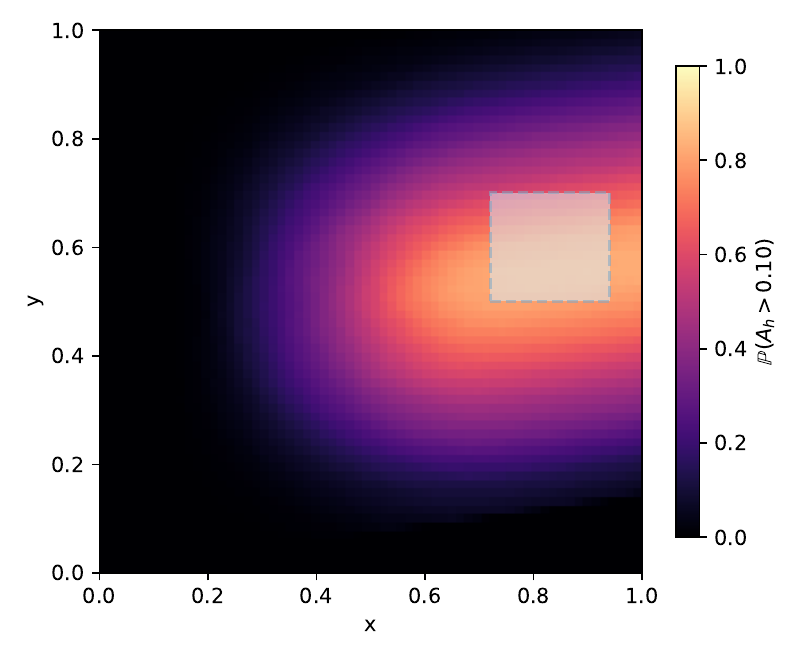}
\caption{$\mathbb P(A_h>0.10)$.}
\label{fig:shadowing_probability}
\end{subfigure}
\hfill
\begin{subfigure}[t]{0.315\textwidth}
\centering
\includegraphics[width=\textwidth]
{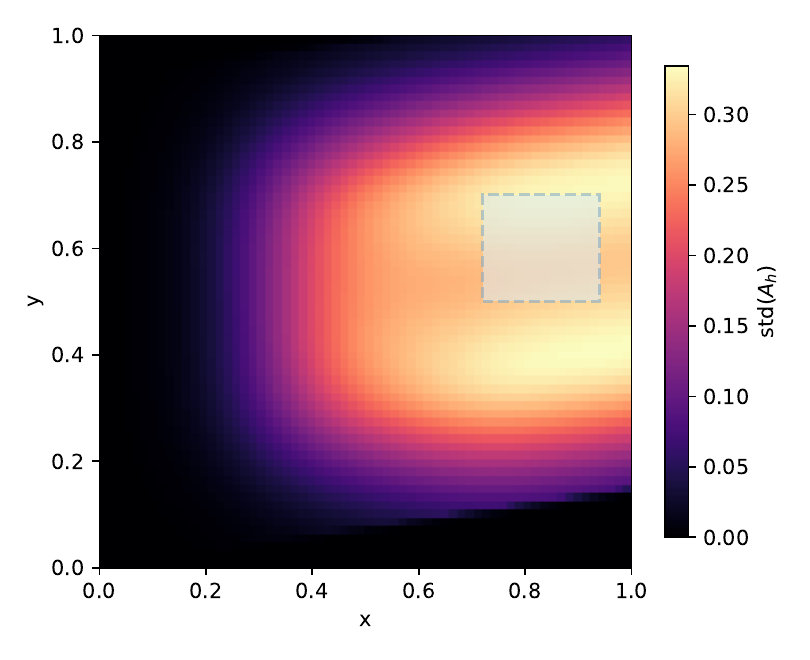}
\caption{$\operatorname{std}(A_h)$.}
\label{fig:shadowing_std}
\end{subfigure}
\hfill
\begin{subfigure}[t]{0.315\textwidth}
\centering
\includegraphics[width=\textwidth]
{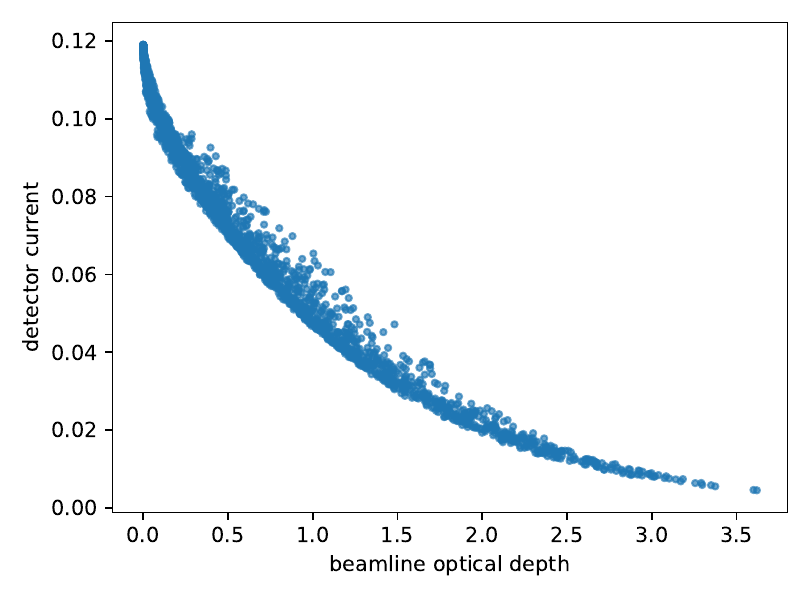}
\caption{$J_{\mathrm{det}}$ against $\tau_{\mathrm{beam}}$.}
\label{fig:shadowing_detector_optical_depth}
\end{subfigure}

\caption{\CurrentExample: ensemble-level material-shadowing
diagnostics. Panel~\textup{(a)} shows the probability of at least
ten percent relative attenuation, panel~\textup{(b)} shows the sample
standard deviation of the relative attenuation, and
panel~\textup{(c)} relates the detector current to the inclusion
optical depth along the incident beam centreline. The rectangle in
panels~\textup{(a)} and~\textup{(b)} marks the target region $D_T$.}
\label{fig:shadowing_ensemble_story}
\end{figure}

\begin{table}[h!]
\centering
\scriptsize
\setlength{\tabcolsep}{5pt}
\begin{tabular}{ccccc}
\hline
Quantity
& mean
& standard deviation
& CV
& 5--95\% range
\\
\hline
$J_{\mathrm{det}}$
& $6.446\times10^{-2}$
& $3.138\times10^{-2}$
& $0.487$
& $\qb{0.014,0.114}$
\\
$J_T$
& $3.343\times10^{-1}$
& $1.594\times10^{-1}$
& $0.477$
& $\qb{0.078,0.583}$
\\
$J_{\mathrm{inc}}$
& $7.162\times10^{-2}$
& $4.030\times10^{-2}$
& $0.563$
& $\qb{9.726\times10^{-3},0.139}$
\\
\hline
\end{tabular}
\caption{\CurrentExample: sample statistics for the principal scalar
responses, computed from $4096$ samples.}
\label{tab:shadowing_statistics}
\end{table}

\subsubsection{Pathwise sensitivities and adjoint verification.}

Derivative-based scores \cite{SobolKucherenko2009DGSM,Cacuci2003} are
computed for $J_{\mathrm{det}}$, $J_T$ and $J_{\mathrm{inc}}$ using
$N_{\mathrm{DGSM}}=2048$ samples and
\eqref{eq:uq_dgsm_estimator}--\eqref{eq:uq_dgsm_score}.  Derivatives
are taken with respect to the standardised variables $\zeta_r$,
including the parameter scalings in
\eqref{eq:shadow_standardised_centres}--%
\eqref{eq:shadow_standardised_width_strength}. These quantities are
normalised mean-square pathwise derivative diagnostics.

\begin{table}[h!]
\centering
\scriptsize
\setlength{\tabcolsep}{7pt}
\begin{tabular}{ccccc}
\hline
Observable
& $\zeta_1\;(x_c)$
& $\zeta_2\;(y_c)$
& $\zeta_3\;(\sigma_c)$
& $\zeta_4\;(a_{\mathrm{inc}})$
\\
\hline
$J_{\mathrm{det}}$ & $0.012$ & $0.732$ & $0.184$ & $0.072$ \\
$J_T$              & $0.016$ & $0.731$ & $0.180$ & $0.073$ \\
$J_{\mathrm{inc}}$ & $0.014$ & $0.680$ & $0.219$ & $0.087$ \\
\hline
\end{tabular}
\caption{\CurrentExample: normalised derivative-based scores for the
principal responses. Derivatives are taken with respect to the
standardised input variables using reverse-mode differentiation
through the wavefront sweep.}
\label{tab:shadowing_dgsm}
\end{table}

The ordering is stable across the three responses. The vertical
position $y_c$ contributes scores between $0.680$ and $0.732$,
followed by the inclusion width $\sigma_c$, whose scores lie between
$0.180$ and $0.219$. The strength $a_{\mathrm{inc}}$ contributes between
$0.072$ and $0.087$, while the horizontal position $x_c$ contributes
only between $0.012$ and $0.016$. This ordering is consistent with
Figure~\ref{fig:shadowing_ensemble_story}: vertical overlap with the
narrow incident beam dominates the response, whereas translation
along the beam direction has comparatively little effect.

As an independent verification of the differentiable implementation,
we compare the wavefront solution and its reverse-mode derivatives
with separately assembled block forward and transpose solves.  The
comparison is performed in double precision on
$N_x=N_y\in\{8,12,16,24,32\}$ for the three principal observables
$J_{\mathrm{det}}$, $J_T$ and $J_{\mathrm{inc}}$. The maximum errors
over all meshes and observables are less than double-precision
roundoff.

\FloatBarrier

\ExampleSubsection{Fixed-work coupled multigroup power-iteration stress test}
{subsec:pincell_stress_test}

The final experiment embeds the batched sweep in a seven-group
transport eigenvalue iteration and tests simultaneous tensorisation
over samples, energy groups and sign-compatible ordinates, together
with sample microbatching inside a coupled source iteration. The
workload uses a C5G7/KAIST-style material and geometry construction to
exercise the coupled algorithm at substantially larger scale.

All layouts start from the same initial state and execute exactly
$1000$ power iterations. The common outer-iteration count gives
identical computational work across layouts and isolates three
features, simultaneous sample--group--ordinate execution, the
memory--throughput tradeoff under sample microbatching, and agreement
of the resulting terminal diagnostics across microbatch sizes.

\subsubsection{Coupled calculation and material configuration.}

The proqblem uses a $102\times102$ pin map, one $\mathbb Q_0$ cell per
pin, $N_{\mathrm{samp}}=1536$ samples, $G=7$ energy groups and $M=16$
ordinates. Hence
\[
N_{\mathrm{cell}}
=
102^2
=
10{,}404,
\qquad
N_{\mathrm{samp}}N_{\mathrm{cell}}GM
=
1{,}789{,}820{,}928
\]
sample--cell--group--ordinate updates are performed per power
iteration. 

The synthetic material data retain the seven-group structure and
material categories of the C5G7 MOX benchmark
\cite{LewisSmithNa2001C5G7,SmithLewisNa2005C5G73D,Dahl2006PARTISN},
including UO$_2$, three MOX enrichments, guide-tube cells,
fission-chamber cells and moderator.  The reflected quarter-core
construction follows KAIST-style geometry conventions
\cite{Cho2000KAIST1A,KnightBryceHall2013}, with an additional
moderator surround and vacuum exterior inflow for the present
algorithmic stress test.

\begin{figure}[h!]
\centering
\includegraphics[width=0.58\textwidth]
{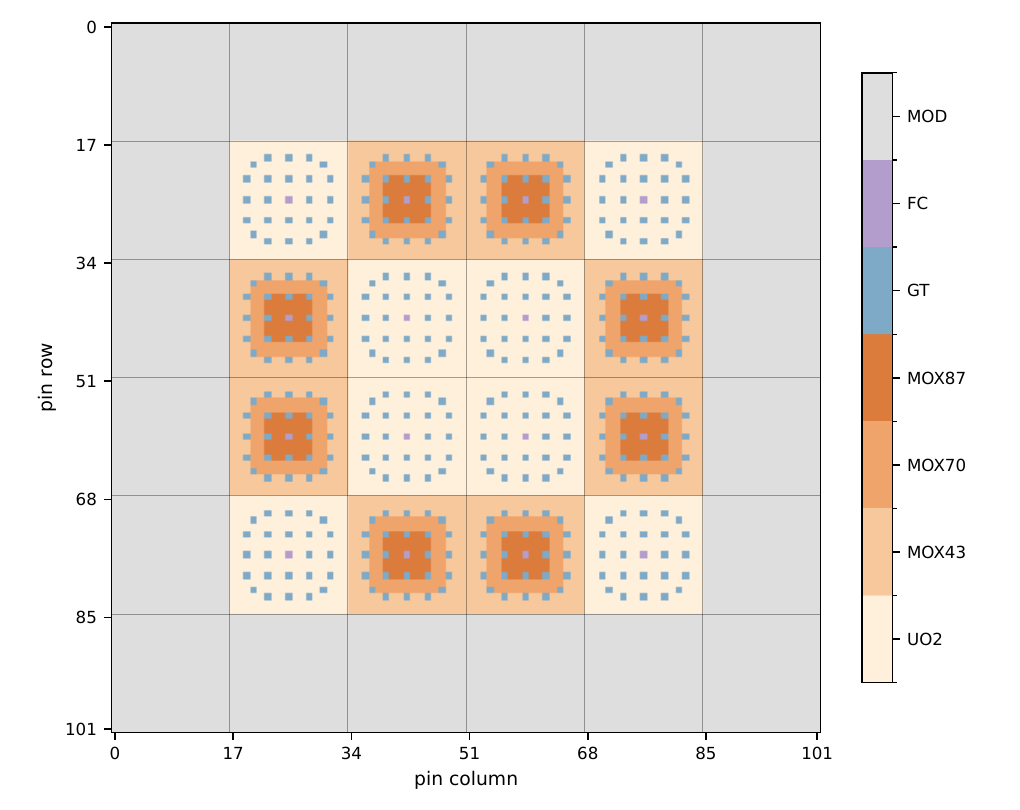}
\caption{\CurrentExample: material configuration for the
C5G7/KAIST-style stress test, containing UO$_2$, three MOX
enrichments, guide-tube, fission-chamber and moderator cells.}
\label{fig:pincell_material_map}
\end{figure}

The quadrature has four equally weighted ordinates in each quadrant
and uses the normalisation
\eqref{eq:model_angular_weight_normalisation}. The four sign classes
use separate Cartesian wavefronts. Within each class, samples, groups
and its four ordinates are batched together. Each class has $203$
stages, and its central stage exposes
\[
1536\times7\times4\times102
=
4{,}386{,}816
\]
independent cell updates.

At iteration $\ell$, the scattering and fission source is
\begin{equation}
Q_{\gamma,m}^{(\ell)}(x,\xi)
=
\sum_{\gamma'=1}^{G}
\Sigma_{s,\gamma'\to\gamma}(x,\xi)
\phi_{\gamma'}^{(\ell)}(x,\xi)
+
\frac{\chi_\gamma(x)}{k^{(\ell)}(\xi)}
\sum_{\gamma'=1}^{G}
\nu\Sigma_{f,\gamma'}(x,\xi)
\phi_{\gamma'}^{(\ell)}(x,\xi).
\label{eq:pincell_power_source}
\end{equation}
Once this source has been formed, the fixed-source equations are
independent over samples, groups and ordinates within each sign class.

For sample $n$, define
\begin{equation}
F_{n,K}^{(\ell)}
=
\sum_{\gamma=1}^{G}
\int_K
\nu\Sigma_{f,\gamma}(x,\xi^{(n)})
\phi_\gamma^{(\ell)}(x,\xi^{(n)})\,\mathrm dx,
\qquad
\mathscr F_n^{(\ell)}
=
\sum_K F_{n,K}^{(\ell)}.
\label{eq:pincell_cellwise_fission}
\end{equation}
The samplewise update is
\begin{align}
k_n^{(\ell+1)}
&=
k_n^{(\ell)}
\widetilde{\mathscr F}_n^{(\ell+1)},
\label{eq:pincell_keff_update}
\\
\phi_{\gamma,n}^{(\ell+1)}
&=
\frac{\widetilde\phi_{\gamma,n}^{(\ell+1)}}
{\widetilde{\mathscr F}_n^{(\ell+1)}},
\label{eq:pincell_flux_normalisation}
\\
F_{n,K}^{(\ell+1)}
&=
\frac{\widetilde F_{n,K}^{(\ell+1)}}
{\widetilde{\mathscr F}_n^{(\ell+1)}}.
\label{eq:pincell_fission_normalisation}
\end{align}
Thus each sample has unit total fission production after every
iteration.

The nominal fission-production scale is calibrated once. Let
$k_{\mathrm{trial}}$ be the multiplication factor obtained with all
perturbations zero and unit scale, then
\begin{equation}
s_f=\frac{1}{k_{\mathrm{trial}}}.
\label{eq:pincell_fission_calibration}
\end{equation}
For the reported calculation, $s_f=15.672$, and this value is fixed
across the ensemble.

\subsubsection{Structured uncertainty model.}

The six inputs are independent standard-normal variables,
\begin{equation}
\xi=(z_1,\ldots,z_6),
\qquad
z_r\stackrel{\mathrm{ind}}{\sim}\mathcal N(0,1).
\label{eq:pincell_random_variables}
\end{equation}
For a nonnegative coefficient $q_{\gamma,K}^0$ on an active set
$\mathcal A$, conservative perturbations have the form
\begin{align}
q_{\gamma,K}(z)
&=
\frac{
q_{\gamma,K}^0
\exp(azs_{\gamma,K})
}{
Z_{\mathcal A}(z)
},
\qquad
(\gamma,K)\in\mathcal A,
\label{eq:pincell_conservative_transform}
\\
Z_{\mathcal A}(z)
&=
\frac{
\displaystyle
\sum_{(\gamma,K)\in\mathcal A}
q_{\gamma,K}^0\exp(azs_{\gamma,K})
}{
\displaystyle
\sum_{(\gamma,K)\in\mathcal A}q_{\gamma,K}^0
}.
\label{eq:pincell_conservative_normalisation}
\end{align}
Consequently,
\begin{equation}
\sum_{(\gamma,K)\in\mathcal A}q_{\gamma,K}(z)
=
\sum_{(\gamma,K)\in\mathcal A}q_{\gamma,K}^0.
\label{eq:pincell_conservative_identity}
\end{equation}

Entries outside $\mathcal A$ are unchanged. The transformations and
amplitudes are summarised in
Table~\ref{tab:pincell_perturbations}. The variable $z_1$ transfers
thermal absorption between UO$_2$ and MOX fuel, while $z_2$
redistributes MOX fission production from MOX43 towards MOX87. The
variable $z_3$ tilts the fuel fission spectrum using
\[
s_{\mathrm{spec}}(\gamma)
=
-1+\frac{2(\gamma-1)}{G-1}.
\]
The variable $z_4$ redistributes moderator scattering from
within-group to downscatter entries while preserving each
scattering-row sum. Finally, $z_5$ redistributes thermal fuel
absorption from the core centre towards its periphery, and $z_6$
redistributes fuel fission production from left to right using
\[
s_{\mathrm{hor}}(K)=2x_{2,K}-1.
\]

The removal and production perturbations follow the convention of
Section~\ref{sec:model}. The calibrated factor $s_f$ and the variables
$z_2$, $z_3$ and $z_6$ act on the fission-production coefficient
$\nu\Sigma_f$ and leave the removal term fixed. The variables $z_1$ and
$z_5$ perturb non-scattering removal, so the total cross section is
updated according to
\begin{equation}
\Sigma_{t,\gamma,K}(\xi)
=
\Sigma_{t,\gamma,K}^{0}
+
\Sigma_{a,\gamma,K}(\xi)
-
\Sigma_{a,\gamma,K}^{0}.
\label{eq:pincell_total_cross_section_update}
\end{equation}
The variable $z_4$ redistributes moderator scattering while preserving
each scattering-row sum and therefore leaves the total scattering
removal unchanged.

The implementation checks the preserved absorption totals, integrated
fission coefficients and moderator scattering-row sums for every
sample. The maximum relative discrepancies are
$6.582\times10^{-16}$, $7.013\times10^{-16}$ and
$4.108\times10^{-16}$, respectively.

\begin{table}[h!]
\centering
\scriptsize
\setlength{\tabcolsep}{4pt}
\begin{tabular}{cccc}
\hline
Variable & transformation & amplitude & preserved coefficient \\
\hline
$z_1$ & UO2/MOX thermal absorption & $0.060$ & thermal fuel $\Sigma_a$ \\
$z_2$ & MOX enrichment redistribution & $0.080$ & MOX $\nu\Sigma_f$ \\
$z_3$ & fuel fission spectral tilt & $0.100$ & material $\nu\Sigma_f$ \\
$z_4$ & moderator downscatter split & $0.120$ & scattering-row sum \\
$z_5$ & radial fuel absorption & $0.075$ & thermal fuel $\Sigma_a$ \\
$z_6$ & horizontal fuel fission tilt & $0.100$ & fuel $\nu\Sigma_f$ \\
\hline
\end{tabular}
\caption{\CurrentExample: independent standard-normal spatial and spectral perturbations.}
\label{tab:pincell_perturbations}
\end{table}

\subsubsection{Fixed-work iteration monitoring and microbatch scaling.}

We monitor the outer iteration using the worst-sample eigenvalue and
fission-shape residuals
\begin{align}
r_k^{(\ell)}
&=
\max_n
\frac{
\norm{k_n^{(\ell)}-k_n^{(\ell-1)}}
}{
\max\qc{\norm{k_n^{(\ell)}},10^{-14}}
},
\label{eq:pincell_keff_residual}
\\
r_F^{(\ell)}
&=
\max_n
\frac{
\Norm{ F_n^{(\ell)}-F_n^{(\ell-1)}}_1
}{
\max\qc{\Norm{ F_n^{(\ell)}}_1,10^{-14}}
}.
\label{eq:pincell_fission_residual}
\end{align}
The full-batch run reaches the iteration limit, with
\[
r_k^{(1000)}
=
3.276\times10^{-7},
\qquad
r_F^{(1000)}
=
4.086\times10^{-5}.
\]
At iteration $1000$, the eigenvalue residual is below its nominal
tolerance, while the fission-shape residual is
$4.086\times10^{-5}$. These values characterise the common fixed-work
endpoint used in the layout comparison.

The run performs
\[
1000\times1{,}789{,}820{,}928
=
1{,}789{,}820{,}928{,}000
\]
updates in $2.016\times10^3$ seconds, giving an overall throughput of
$8.878\times10^8$ updates per second. Of the total runtime,
$1.934\times10^3$ seconds are spent in the sweeps and $81.514$ seconds
in source formation, normalisation and updates.

The active sign-class array contains
\[
1536\times7\times4\times10{,}404
=
447{,}455{,}232
\]
single-precision entries, or approximately $1.790\times10^3$ MB.
Processing one sign class at a time avoids storing all $16$
directional states simultaneously.

The same $1536$ samples are processed with
\begin{equation}
B\in\qc{64,256,512,1536}.
\label{eq:pincell_microbatch_sizes}
\end{equation}
Every microbatch reaches the common limit of $1000$ iterations, so all
four layouts perform the same total work.

For fuel pin $\mathcal P_j$, define the terminal pin production for
sample $n$ and layout $B$ by
\begin{equation}
P_{n,j}^{(B)}
=
\sum_{\gamma=1}^{G}
\int_{\mathcal P_j}
\nu\Sigma_{f,\gamma}(x,\xi^{(n)})
\phi_{\gamma,n}^{(B)}(x)\,\mathrm dx.
\label{eq:pincell_pin_power}
\end{equation}
Let $\mathcal I_{\mathrm{fuel}}$ index the fuel pins. The normalised pin
production and its samplewise maximum are
\begin{equation}
\overline P_{n,j}^{(B)}
=
\frac{
P_{n,j}^{(B)}
}{
\norm{\mathcal I_{\mathrm{fuel}}}^{-1}
\displaystyle
\sum_{\ell\in\mathcal I_{\mathrm{fuel}}}
P_{n,\ell}^{(B)}
},
\qquad
P_{\max,n}^{(B)}
=
\max_{j\in\mathcal I_{\mathrm{fuel}}}
\overline P_{n,j}^{(B)}.
\label{eq:pincell_normalised_pin_power}
\end{equation}
Thus $\overline P_{n,j}^{(B)}=1$ denotes the samplewise mean fuel-pin
production, while $P_{\max,n}^{(B)}$ provides a scalar fingerprint that
is sensitive to the spatial distribution of the terminal fission
production.

\begin{table}[h!]
\centering
\scriptsize
\setlength{\tabcolsep}{4pt}
\begin{tabular}{cccc}
\hline
$B$ & time (s) & updates/s & peak alloc. (MB) \\
\hline
$64$ & $1.052\times 10^{4}$ & $1.702\times 10^{8}$ & $525.032$ \\
$256$ & $3.468\times 10^{3}$ & $5.162\times 10^{8}$ & $2.099\times 10^{3}$  \\
$512$ & $2.574\times 10^{3}$ & $6.954\times 10^{8}$ & $4.198\times 10^{3}$  \\
$1536$ & $2.016\times 10^{3}$ & $8.878\times 10^{8}$ & $1.259\times 10^{4}$ \\
\hline
\end{tabular}
\caption{\CurrentExample: sample-microbatch timing and memory.}
\label{tab:pincell_microbatching}
\end{table}

Increasing $B$ from $64$ to $1536$ reduces the runtime from
$1.052\times10^4$ to $2.016\times10^3$ seconds and increases
throughput from $1.702\times10^8$ to $8.878\times10^8$ updates per
second, a factor of approximately $5.22$. The peak allocation
increases by a factor of approximately $24$, from $525$ MB to
$1.259\times10^4$ MB. Both terminal discrepancies are zero for every
tested layout.

\begin{figure}[h!]
\centering

\begin{subfigure}[t]{0.48\textwidth}
\centering
\includegraphics[width=\textwidth]
{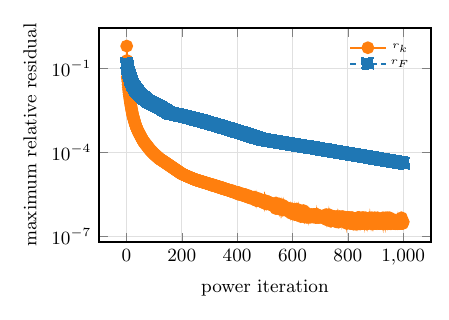}
\caption{Worst-sample iteration residuals.}
\label{fig:pincell_iteration_history}
\end{subfigure}
\hfill
\begin{subfigure}[t]{0.48\textwidth}
\centering
\includegraphics[width=\textwidth]
{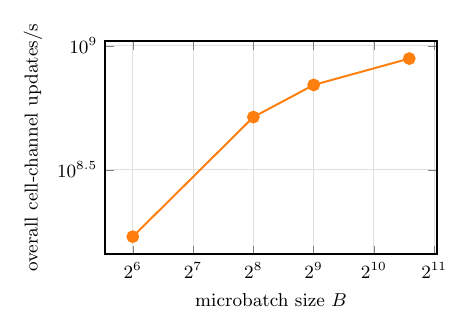}
\caption{Overall throughput against microbatch size.}
\label{fig:pincell_microbatch_throughput}
\end{subfigure}

\caption{\CurrentExample: iteration history and microbatch scaling.
Panel~\textup{(a)} shows the worst-sample eigenvalue and
fission-shape residuals over the common $1000$-iteration work budget.
Panel~\textup{(b)} shows the overall cell--group--ordinate update rate
as the sample microbatch size $B$ is increased. Exact timing, memory
and equivalence results are given in
Table~\ref{tab:pincell_microbatching}.}
\label{fig:pincell_iteration_and_batching}
\end{figure}

Table~\ref{tab:pincell_microbatching} and
Figure~\ref{fig:pincell_iteration_and_batching} give the two
conclusions of the fixed-work stress test. Increasing the sample
microbatch size exposes more channel parallelism and therefore
increases throughput, at the cost of a larger device-memory footprint.

The experiment exercises the complete coupled path from source
formation through sign-class transport sweeps, simultaneous
sample--group--ordinate batching, normalisation and outer iteration.
Across the tested layouts, sample microbatching changes the execution
layout and memory requirement while preserving the reported terminal
fingerprints.

\FloatBarrier

\section{Conclusions}
\label{sec:conclusions}

We have formulated the upwind DG fixed-source transport sweep as a
graph-compatible batched wavefront algorithm. For sweepable problems,
the directed upwind graph gives a block lower-triangular ordering of
the assembled DG system, and the wavefront traversal is algebraically
equivalent to block forward substitution. The tensor formulation
therefore evaluates the same cell-local DG equations and weak inflow
terms while exposing independent work over both wavefront cells and
fixed-source channels.

The principal algorithmic contribution is the sweep-class construction.
Channels whose interior-face signs induce the same dependency graph may
reuse one spatial schedule even when their local matrices, sources and
inflow data differ. Samples, right-hand sides, energy groups and
sign-compatible angular ordinates can therefore be represented as
leading tensor axes and split into memory-controlled microbatches. A
broader schedule-compatible grouping is possible when the union graph
is acyclic, although the present implementation uses graph-equal
classes.

The same finite-dimensional wavefront program provides branchwise
discrete sensitivities. On an open fixed-face-sign branch,
reverse-mode differentiation of the unrolled sweep gives the same
adjoint action as the transpose of the assembled block-triangular DG
system. At face-sign changes the discrete map remains continuous under
the stated assumptions but is only piecewise smooth.

The numerical experiments verify the expected DG convergence rates,
measure the execution-layout and memory--throughput effects of sample
batching, and demonstrate the method in a material-shadowing UQ
calculation. A seven-group fixed-work stress test additionally
demonstrates simultaneous sample--group--ordinate execution and
agreement of global and spatially sensitive terminal diagnostics across
the tested microbatch layouts. The reported performance characterises
the present single-GPU PyTorch implementation.

Future work will extend the formulation to schedule-compatible classes
with nonidentical graphs, factor-based and performance-portable local
solves, unstructured-mesh sweep schedules, and accelerated coupled
iterations.

\section*{Acknowledgements}

TP is supported by the EPSRC programme grant Mathematics of Radiation
Transport (MaThRad) EP/W026899/2 and the Leverhulme Trust
RPG-2021-238. TP is also grateful to Paul Smith who seeded the idea of
this work at the Answers Seminar. All of this support is gratefully
acknowledged.

\section*{Data Availability}

The Python code that reproduces all numerical experiments and plots in
this paper can be found at in a zenodo repository upon acceptance of the publication. 

\section*{Competing Interests}

The author declares no competing interests.

\printbibliography

\end{document}